\newcommand{\nc}{\newcommand}
\nc{\eg}{\mathfrak{e} } \nc{\fg}{\mathfrak{f} } \nc{\vg}{\mathfrak{v} } \nc{\wg}{\mathfrak{w} }
\nc{\zg}{\mathfrak{z} } \nc{\ngo}{\mathfrak{n} } \nc{\kg}{\mathfrak{k} }
\nc{\mg}{\mathfrak{m} } \nc{\bg}{\mathfrak{b} } \nc{\ggo}{\mathfrak{g} }
\nc{\ggob}{\overline{\mathfrak{g}} } \nc{\sog}{\mathfrak{so} }
\nc{\sug}{\mathfrak{su} } \nc{\spg}{\mathfrak{sp} } \nc{\slg}{\mathfrak{sl} }
\nc{\glg}{\mathfrak{gl} } \nc{\cg}{\mathfrak{c} } \nc{\rg}{\mathfrak{r} }
\nc{\hg}{\mathfrak{h} } \nc{\tg}{\mathfrak{t} } \nc{\ug}{\mathfrak{u} }
\nc{\dg}{\mathfrak{d} } \nc{\ag}{\mathfrak{a} } \nc{\pg}{\mathfrak{p} }
\nc{\sg}{\mathfrak{s} } \nc{\affg}{\mathfrak{aff} }
\nc{\pca}{\mathcal{P}} \nc{\nca}{\mathcal{N}} \nc{\lca}{\mathcal{L}}
\nc{\oca}{\mathcal{O}} \nc{\mca}{\mathcal{M}} \nc{\tca}{\mathcal{T}}
\nc{\aca}{\mathcal{A}} \nc{\cca}{\mathcal{C}} \nc{\gca}{\mathcal{G}}
\nc{\sca}{\mathcal{S}} \nc{\hca}{\mathcal{H}} \nc{\bca}{\mathcal{B}}
\nc{\dca}{\mathcal{D}} \nc{\val}{\operatorname{val}}
\nc{\vp}{\varphi} \nc{\ddt}{\frac{d}{dt}} \nc{\dds}{\frac{d}{ds}}
\nc{\dpar}{\frac{\partial}{\partial t}} \nc{\im}{\mathtt{i}}
\nc{\SO}{\mathrm{SO}} \nc{\Spe}{\mathrm{Sp}} \nc{\Sl}{\mathrm{SL}}
\nc{\SU}{\mathrm{SU}} \nc{\Or}{\mathrm{O}} \nc{\U}{\mathrm{U}} \nc{\Gl}{\mathrm{GL}}
\nc{\Se}{\mathrm{S}} \nc{\Cl}{\mathrm{Cl}} \nc{\Spein}{\mathrm{Spin}}
\nc{\Pin}{\mathrm{Pin}} \nc{\G}{\mathrm{GL}_n(\RR)} \nc{\g}{\mathfrak{gl}_n(\RR)}
\nc{\RR}{{\Bbb R}} \nc{\HH}{{\Bbb H}} \nc{\CC}{{\Bbb C}} \nc{\ZZ}{{\Bbb Z}}
\nc{\FF}{{\Bbb F}} \nc{\NN}{{\Bbb N}} \nc{\QQ}{{\Bbb Q}} \nc{\PP}{{\Bbb P}}
\nc{\vs}{\vspace{.2cm}} \nc{\vsp}{\vspace{1cm}} \nc{\ip}{\langle\cdot,\cdot\rangle}
\nc{\ipp}{(\cdot,\cdot)} \nc{\la}{\langle} \nc{\ra}{\rangle} \nc{\unm}{\tfrac{1}{2}}
\nc{\unc}{\tfrac{1}{4}} \nc{\und}{\tfrac{1}{16}} \nc{\no}{\vs\noindent}
\nc{\lamkn}{\Lambda^2(\RR^{q+n})^*\otimes\RR^{q+n}} \nc{\lamn}{\Lambda^2(\RR^n)^*\otimes\RR^n} \nc{\lamp}{\Lambda^2\pg^*\otimes\pg}
\nc{\lamg}{\Lambda^2\ggo^*\otimes\ggo} \nc{\lamngo}{\Lambda^2\ngo^*\otimes\ngo}
\nc{\tangz}{{\rm T}^{\rm Zar}} \nc{\mum}{/\!\!/} \nc{\kir}{/\!\!/\!\!/}
\nc{\Ri}{\tfrac{4\Ric_{\mu}}{||\mu||^2}} \nc{\ds}{\displaystyle}
\nc{\ben}{\begin{enumerate}} \nc{\een}{\end{enumerate}} \nc{\f}{\frac}
\nc{\lb}{[\cdot,\cdot]} \nc{\isn}{\tfrac{1}{||v||^2}}
\nc{\gkp}{(\ggo=\kg\oplus\pg,\ip)} \nc{\ukh}{(\ug=\kg\oplus\hg,\ip)}
\nc{\raw}{\rightarrow} \nc{\lraw}{\longrightarrow}
\nc{\Hess}{\operatorname{Hess}} \nc{\ad}{\operatorname{ad}}
\nc{\Ad}{\operatorname{Ad}} \nc{\rank}{\operatorname{rank}}
\nc{\Irr}{\operatorname{Irr}} \nc{\End}{\operatorname{End}}
\nc{\Aut}{\operatorname{Aut}} \nc{\Inn}{\operatorname{Inn}}
\nc{\Der}{\operatorname{Der}} \nc{\Ker}{\operatorname{Ker}}
\nc{\Iso}{\operatorname{I}} \nc{\Diff}{\operatorname{Diff}}
\nc{\Lie}{\operatorname{Lie}} \nc{\tr}{\operatorname{tr}} \nc{\dif}{\operatorname{d}}
\nc{\sen}{\operatorname{sen}} \nc{\modu}{\operatorname{mod}}
\nc{\Riem}{\operatorname{Rm}} \nc{\Ricci}{\operatorname{Ric}}
\nc{\sym}{\operatorname{sym}} \nc{\symac}{\operatorname{sym^{ac}}}
\nc{\symc}{\operatorname{sym^{c}}} \nc{\scalar}{\operatorname{R}}
\nc{\grad}{\operatorname{grad}} \nc{\ricci}{\operatorname{Rc}}
\nc{\nr}{\operatorname{nr}} \nc{\riccic}{\operatorname{ric^{c}}}
\nc{\riccig}{\operatorname{ric^{\gamma}}} \nc{\Rin}{\operatorname{M}}
\nc{\Le}{\operatorname{L}}
\nc{\level}{\operatorname{level}} \nc{\rad}{\operatorname{rad}}
\nc{\abel}{\operatorname{ab}} \nc{\CH}{\operatorname{CH}}
\nc{\mcc}{\operatorname{mcc}} \nc{\Adj}{\operatorname{Adj}}
\nc{\Order}{\operatorname{O}} \nc{\mm}{\operatorname{M}}
\nc{\inj}{\operatorname{inj}} \nc{\proy}{\operatorname{pr}}
\nc{\vol}{\operatorname{vol}}\nc{\diag}{\operatorname{Diag}}
\theoremstyle{plain}
\newtheorem{theorem}{Theorem}[section]
\newtheorem{proposition}[theorem]{Proposition}
\newtheorem{corollary}[theorem]{Corollary}
\newtheorem{lemma}[theorem]{Lemma}
\theoremstyle{definition}
\newtheorem{definition}[theorem]{Definition}
\theoremstyle{remark}
\newtheorem{remark}[theorem]{Remark}
\newtheorem{example}[theorem]{Example}
\title{On homogeneous Ricci solitons}
\author{Ramiro Lafuente} \author{Jorge Lauret}
\address{Universidad Nacional de C\'ordoba, FaMAF and CIEM, 5000 C\'ordoba, Argentina}
\email{rlafuente@famaf.unc.edu.ar, lauret@famaf.unc.edu.ar}
\thanks{This research was partially supported by grants from CONICET, FONCYT and SeCyT (Universidad Nacional de C\'ordoba)}
\begin{document}

\maketitle

\begin{abstract}
We study the evolution of homogeneous Ricci solitons under the bracket flow, a dynamical system on the space $\hca_{q,n}\subset\lamg$ of all homogeneous spaces of dimension $n$ with a $q$-dimensional isotropy, which is equivalent to the Ricci flow for homogeneous manifolds.  We prove that algebraic solitons (i.e. the Ricci operator is a multiple of the identity plus a derivation) are precisely the fixed points of the system, and that a homogeneous Ricci soliton is isometric to an algebraic soliton if and only if the corresponding bracket flow solution is not chaotic, in the sense that its $\omega$-limit set consists of a single point.  We also geometrically characterize algebraic solitons among homogeneous Ricci solitons as those for which the Ricci flow solution is simultaneously diagonalizable.
\end{abstract}

\tableofcontents

\section{Introduction}

Ricci solitons are precisely those Riemannian metrics that are `nice' enough to be `upgraded' by the Ricci flow, in the sense that they just evolve by scaling and pull-back by diffeomorphisms.  A good understanding of them is therefore crucial to study the singularity behavior of the Ricci flow on any class of manifolds.  Trivial examples are provided by Einstein metrics, as well as by direct products of an Einstein manifold with any Euclidean space.  Non-K\"ahler examples of Ricci solitons are still very hard to find (see \cite{DncHllWng}).

Homogeneity seems to be a very strong condition to impose.  Nontrivial homogeneous Ricci solitons must be expanding (i.e. the scaling is time increasing) and they can never be compact nor gradient.  One could say that they should not exist, but they do.  Actually, the nilpotent part of any Einstein solvable Lie group gives an example of a homogeneous Ricci soliton, known as a {\it nilsoliton} in the literature (see \cite{soliton}).  One may also extend a nilsoliton to a different solvable Lie group and obtain Ricci solitons which are not Einstein, the so called {\it solvsolitons} (see \cite{solvsolitons}).  In all these examples the following `algebro-geometric' condition holds:
\begin{equation}\label{ricder}
\Ricci(g)=cI+D, \qquad\mbox{for some}\quad c\in\RR,\quad D\in\Der(\sg),
\end{equation}
once the left-invariant metric $g$ is identified with an inner product on the Lie algebra $\sg$ of the solvable Lie group $S$.  So far, simply connected solvsolitons are the only known examples of nontrivial homogeneous Ricci solitons.

A homogeneous space $(G/K,g)$ is said to be a {\it semi-algebraic soliton} if there exists a one-parameter family of equivariant diffeomorphisms $\vp_t\in\Aut(G/K)$ (i.e. automorphisms of $G$ taking $K$ onto $K$) such that $g(t)=c(t)\vp_t^*g$ is a solution to the Ricci flow starting at $g(0)=g$ for some scaling function $c(t)>0$.  It is called an {\it algebraic soliton} if in addition, for some reductive decomposition $\ggo=\kg\oplus\pg$, the derivatives $d\vp_t|_o:\pg\longrightarrow\pg$ are all symmetric.  The notion of algebraic soliton is precisely the generalization of condition (\ref{ricder}) to any Lie group or homogeneous space.  It has recently been proved in \cite{Jbl} that any homogeneous Ricci soliton $(M,g)$ is semi-algebraic with respect to its full isometry group $G=\Iso(M,g)$.

We give in Section \ref{hrs} an up-to-date overview on homogeneous Ricci solitons.  Next, in Section \ref{ASBF}, we study the evolution of semi-algebraic solitons under the {\it bracket flow}, an ODE for a family $\mu(t)\in\hca_{q,n}\subset\lamg$.  Here $\hca_{q,n}$ denotes the subset of the variety of Lie brackets on the fixed vector space $\ggo$ parameterizing the space of all homogeneous spaces of dimension $n$ with a $q$-dimensional isotropy (see \cite{spacehm}).  This dynamical system has been proved in \cite{homRF} to be equivalent in a precise sense to the Ricci flow (preliminaries on this machinery are given in Section \ref{hm}).  We first show that algebraic solitons are precisely the fixed points, and hence the possible limits of any normalized bracket flow.  This in particular yields their asymptotic behavior and implies that algebraic solitons are all {\it Ricci flow diagonal}, in the sense that the Ricci flow solution $g(t)$ simultaneously diagonalizes with respect to a fixed orthonormal basis of some tangent space.

Furthermore, given a starting point $\mu_0\in\hca_{q,n}$, we prove that one can obtain at most one nonflat algebraic soliton $\lambda$ as a limit by running all possible normalized bracket flow solutions $\mu(t)$.  By using \cite{spacehm}, we can translate this convergence  $\mu(t)\to\lambda$ of Lie brackets into more geometric notions of convergence, including convergence on the pointed or Cheeger-Gromov topology.  The limit Lie bracket $\lambda$ might be non-isomorphic to $\mu(t)$ and therefore provides an explicit limit $(G_\lambda/K_\lambda,g_\lambda)$ which is often non-diffeomorphic and even non-homeomorphic to the starting homogeneous manifold $(G_{\mu_0}/K_{\mu_0},g_{\mu_0})$ .

Regarding semi-algebraic solitons, we obtain that under any normalized bracket flow, they simply evolve by
$$
\mu(t)=e^{tA}\cdot\mu_0=e^{tA}\mu_0(e^{-tA}\cdot, e^{-tA}\cdot),
$$
for some skew-symmetric map $A:\ggo\longrightarrow\ggo$.  Furthermore, they are algebraic solitons if and only if $A\in\Der(\mu_0)$ (i.e. fixed points).  In particular, any homogeneous Ricci soliton would be necessarily isometric to an algebraic soliton in case the bracket flow was not chaotic, in the sense that the $\omega$-limit set of any solution is a single point.  This is an open question.

Whereas being a Ricci soliton is invariant under isometry, the concept of semi-algebraic soliton is not, as it may depend on the presentation of the homogeneous manifold $(M,g)$ as a homogeneous space $(G/K,g)$.  We prove in Section \ref{algdiag} that the property of being Ricci flow diagonal characterizes algebraic solitons.  Namely:

\begin{quote}
A homogeneous Ricci soliton is Ricci flow diagonal if and only if it is isometric to an algebraic soliton.
\end{quote}

Note that this is a geometric characterization as the property of being Ricci flow diagonal is also invariant under isometry.  The Ricci flow evolution of a homogeneous Ricci soliton which is not isometric to any algebraic soliton, if any, is therefore quite different from all known examples (i.e. solvsolitons).

\vs \noindent {\it Acknowledgements.} We are very grateful to M. Jablonski for fruitful discussions on the topic of this paper.

\section{Preliminaries}\label{hm}

Our aim in this section is to briefly describe a framework developed in \cite{spacehm} which allows us to work on the `space of homogeneous manifolds', by parameterizing  the set of all simply connected homogeneous spaces of dimension $n$ and isotropy dimension $q$ by a subset $\hca_{q,n}$ of the variety of $(q+n)$-dimensional Lie algebras.  According to the results in \cite{homRF}, the Ricci flow is equivalent to an ODE system on $\hca_{q,n}$ called the bracket flow.

Given a connected homogeneous Riemannian manifold $(M,g)$, each transitive closed Lie subgroup $G\subset\Iso(M,g)$ gives rise to a
presentation of $(M,g)$ as a homogeneous space $(G/K,g)$, where
$K$ is the isotropy subgroup of $G$ at some point $o\in M$ and $g$ becomes a $G$-invariant metric.

As $K$ is compact, there always
exists a {\it reductive} (i.e. $\Ad(K)$-invariant) decomposition $\ggo=\kg\oplus\pg$, where $\ggo$ and $\kg$ are respectively the Lie algebras of $G$ and $K$.  Thus $\pg$ can be naturally identified with the tangent space $\pg\equiv T_oM=T_oG/K$, by taking the value at the origin $o=eK$ of the Killing vector fields
corresponding to elements of $\pg$ (i.e. $X_o=\ddt|_0\exp{tX}(o)$).  Let $g_{\ip}$ denote the $G$-invariant metric on $G/K$ determined by $\ip:=g(o)$, the $\Ad(K)$-invariant inner product on $\pg$ defined by $g$.  In this situation, when a reductive decomposition has already been chosen, the homogeneous space will be denoted by $(G/K,g_{\ip})$.

In order to get a presentation $(M,g)=(G/K,g_{\ip})$ of a connected homogeneous manifold as a homogeneous space, there is no need for $G\subset\Iso(M,g)$ to hold, that is, an {\it effective} action.  It is actually enough to have a transitive action of $G$ on $M$ by isometries being {\it almost-effective} (i.e. the normal subgroup $\{ g\in G:ghK=hK, \;\forall h\in G\}$ of $K$ is discrete), along with a reductive decomposition $\ggo=\kg\oplus\pg$ such that the inner product $\ip$ on $\pg$ defined by $\ip:=g(o)$ is $\Ad(K)$-invariant.  Any homogeneous space considered in this paper will be assumed to be almost-effective and connected.

In the study of homogeneous Ricci solitons carried out in the present paper, the following special reductive decomposition has often been very convenient to take.

\begin{lemma}\label{Bkp0}
Let $(G/K,g)$ be a homogeneous space.  Then there exists a reductive decomposition  $\ggo=\kg\oplus\pg$ such that $B(\kg,\pg)=0$, where $B$ is the Killing form of $\ggo$.
\end{lemma}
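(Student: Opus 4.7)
The plan is to start with an arbitrary reductive decomposition $\ggo=\kg\oplus\pg_0$ (which exists by compactness of $K$) and then replace $\pg_0$ by a $B$-orthogonal correction while preserving $\Ad(K)$-invariance. Any $\Ad(K)$-invariant complement of $\kg$ in $\ggo$ has the form $\pg=\{x+L(x):x\in\pg_0\}$ for a unique $\Ad(K)$-equivariant linear map $L:\pg_0\to\kg$, so the condition $B(\kg,\pg)=0$ translates into solving
\[
B(y,L(x))=-B(y,x)\qquad\text{for all }y\in\kg,\ x\in\pg_0.
\]

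First I would analyze $B|_{\kg}$ by exploiting the block-diagonality of $\ad_\ggo y$ for $y\in\kg$: since $\kg$ and $\pg_0$ are both $\Ad(K)$-invariant,
\[
B(y,y')=\tr(\ad_\kg y\,\ad_\kg y')+\tr(\ad_{\pg_0} y\,\ad_{\pg_0} y').
\]
The first summand is the Killing form of the compact Lie algebra $\kg$, which is negative semidefinite with kernel $\zg(\kg)$. The second summand is also negative semidefinite, since $\ad_{\pg_0} y$ is skew-symmetric with respect to any $\Ad(K)$-invariant inner product on $\pg_0$, and it vanishes at $y'=y$ precisely when $[y,\pg_0]=0$. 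Thus $B|_{\kg}$ is negative semidefinite, and its radical $\kg_2:=\{y\in\kg:B(y,\kg)=0\}$ satisfies $\kg_2\subset\zg(\ggo)$; in particular $B(\kg_2,\ggo)=0$.

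Next, pick an $\Ad(K)$-invariant complement $\kg_1$ of $\kg_2$ in $\kg$, so that $B|_{\kg_1}$ is nondegenerate. The equation above is then automatic on $\kg_2$ and reduces to $B(y,L(x))=-B(y,x)$ for $y\in\kg_1$, which admits a unique solution $L:\pg_0\to\kg_1$ by nondegeneracy of $B|_{\kg_1}$. The $\Ad(K)$-invariance of $B$ together with the $\Ad(K)$-invariance of $\kg_1$ and $\pg_0$ forces $L$ to be $\Ad(K)$-equivariant, and therefore $\pg:=\{x+L(x):x\in\pg_0\}$ is a reductive complement satisfying $B(\kg,\pg)=0$.

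The only substantive obstacle is the inclusion $\kg_2\subset\zg(\ggo)$, which is where the compactness of $K$ is essential: it both produces the block-diagonal splitting of $\ad_\ggo y$ and makes the isotropy representation skew, and these two ingredients together force any null vector of $B|_{\kg}$ to commute with all of $\ggo$. Once this is established, the remainder of the argument is elementary linear algebra.
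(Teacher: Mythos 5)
Your proof is correct, but it takes a more roundabout route than the paper's. The paper simply defines $\pg$ to be the $B$-orthogonal complement of $\kg$ in $\ggo$: since $\overline{\Ad(K)}$ is compact and the isotropy representation is faithful (almost-effectiveness, a standing assumption in the paper), $B|_{\kg\times\kg}$ is negative \emph{definite}, so $\kg\cap\kg^{\perp_B}=0$ and a dimension count gives $\ggo=\kg\oplus\kg^{\perp_B}$; the $\Ad(K)$-invariance of $\kg^{\perp_B}$ is automatic from the invariance of $B$ under $\Aut(\ggo)$. You instead start from an arbitrary reductive complement $\pg_0$ and correct it by the graph of an equivariant map $L:\pg_0\to\kg$, which forces you to analyze the radical $\kg_2$ of $B|_{\kg}$ and prove $\kg_2\subset\zg(\ggo)$. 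What your argument buys is that it never invokes almost-effectiveness: it works even when the isotropy representation has a kernel, in which case $B|_{\kg}$ is only semidefinite and the naive choice $\kg^{\perp_B}$ would contain $\kg_2$ and fail to be a complement. Under the paper's hypotheses one has $\kg_2=0$, your $L$ takes values in $\kg_1=\kg$, and your $\pg$ coincides with $\kg^{\perp_B}$, so the two constructions agree; the paper's version is shorter only because negative definiteness lets one skip the graph formalism entirely.
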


\begin{proof}
It follows by taking $\pg$ as the orthogonal complement of $\kg$ in $\ggo$ with respect to $B$.  Recall that $B|_{\kg\times\kg}<0$ since it is well known that $\overline{\Ad(K)}$ is compact in $\Gl(\ggo)$ and the isotropy representation $\ad:\kg\longrightarrow\End(\pg)$ is faithful by almost-effectiveness .  This implies that $\kg\cap\pg=0$, and since $\dim{\pg}\geq\dim{\ggo}-\dim{\kg}$ we obtain $\ggo=\kg+\pg$, concluding the proof.
\end{proof}

\subsection{Varying Lie brackets viewpoint}\label{varhs} (See \cite{spacehm} for further information).
Let us fix for the rest of the section a $(q+n)$-dimensional real vector space $\ggo$ together with a direct sum decomposition
\begin{equation}\label{fixdec}
\ggo=\kg\oplus\pg, \qquad \dim{\kg}=q, \qquad \dim{\pg=n},
\end{equation}
and an inner product $\ip$ on $\pg$.  We consider the space of all skew-symmetric algebras (or brackets) of dimension $q+n$, which is
parameterized by the vector space
$$
V_{q+n}:=\{\mu:\ggo\times\ggo\longrightarrow\ggo : \mu\; \mbox{bilinear and
skew-symmetric}\},
$$
and we set
$$
V_{n}:=\{\mu:\pg\times\pg\longrightarrow\pg : \mu\; \mbox{bilinear and
skew-symmetric}\}.
$$

\begin{definition}\label{hqn} The subset $\hca_{q,n}\subset V_{q+n}$ consists of the brackets $\mu\in V_{q+n}$ such that:
\begin{itemize}
\item [(h1)]  $\mu$ satisfies the Jacobi condition, $\mu(\kg,\kg)\subset\kg$ and $\mu(\kg,\pg)\subset\pg$.

\item[(h2)] If $G_\mu$ denotes the simply connected Lie group with Lie algebra $(\ggo,\mu)$ and $K_\mu$ is the connected Lie subgroup of $G_\mu$ with Lie algebra $\kg$, then $K_\mu$ is closed in $G_\mu$.

\item[(h3)] $\ip$ is $\ad_{\mu}{\kg}$-invariant (i.e. $(\ad_{\mu}{Z}|_{\pg})^t=-\ad_{\mu}{Z}|_{\pg}$ for all $Z\in\kg$).

\item[(h4)] $\{ Z\in\kg:\mu(Z,\pg)=0\}=0$.
\end{itemize}
\end{definition}

Each $\mu\in\hca_{q,n}$ defines a unique simply connected homogeneous space,
\begin{equation}\label{hsmu}
\mu\in\hca_{q,n}\rightsquigarrow\left(G_{\mu}/K_{\mu},g_\mu\right),
\end{equation}
with reductive decomposition $\ggo=\kg\oplus\pg$ and $g_\mu(o_\mu)=\ip$, where $o_\mu:=e_\mu K_\mu$ is the origin of $G_\mu/K_\mu$ and $e_\mu\in G_\mu$ is the identity element.  It is almost-effective by (h4), and it follows from (h3) that $\ip$ is $\Ad(K_{\mu})$-invariant as $K_{\mu}$ is connected.  We note that any $n$-dimensional simply connected homogeneous space $(G/K,g)$ which is almost-effective can be identified with some $\mu\in\hca_{q,n}$, where $q=\dim{K}$.  Indeed, $G$ can be assumed to be simply connected without losing almost-effectiveness, and we can identify any reductive decomposition with $\ggo=\kg\oplus\pg$. In this way, $\mu$ will be precisely the Lie bracket of $\ggo$.

We also fix from now on a basis $\{ Z_1,\dots,Z_q\}$ of $\kg$ and an orthonormal basis $\{ X_1,\dots,X_n\}$ of $\pg$ (see (\ref{fixdec})) and use them to identify the groups $\Gl(\ggo)$, $\Gl(\kg)$, $\Gl(\pg)$ and $\Or(\pg,\ip)$, with $\Gl_{q+n}(\RR)$, $\Gl_q(\RR)$, $\Gl_n(\RR)$ and $\Or(n)$, respectively.

There is a natural linear action of $\Gl_{q+n}(\RR)$ on $V_{q+n}$ given by
\begin{equation}\label{action}
h\cdot\mu(X,Y)=h\mu(h^{-1}X,h^{-1}Y), \qquad X,Y\in\ggo, \quad h\in\Gl_{q+n}(\RR),\quad \mu\in V_{q+n}.
\end{equation}

If $\mu\in\hca_{q,n}$, then $h\cdot\mu\in\hca_{q,n}$ for any $h\in\Gl_{q+n}(\RR)$ of the form
\begin{equation}\label{formh}
h:=\left[\begin{smallmatrix} h_q&0\\ 0&h_n
\end{smallmatrix}\right]\in\Gl_{q+n}(\RR), \quad h_q\in\Gl_q(\RR), \quad h_n\in\Gl_n(\RR),
\end{equation}
such that
\begin{equation}\label{adkh}
[h_n^th_n,\ad_{\mu}{\kg}|_{\pg}]=0.
\end{equation}
We have that $\left(G_{h\cdot\mu}/K_{h\cdot\mu},g_{h\cdot\mu}\right)$ is equivariantly isometric to $\left(G_{\mu}/K_{\mu},g_{\la h_n\cdot,h_n\cdot\ra}\right)$, and in particular, the subset
$$
\left\{ h\cdot\mu:h_q=I,\, h_n\,\mbox{satisfies (\ref{adkh})}\right\}\subset\hca_{q,n},
$$
parameterizes the set of all $G_\mu$-invariant metrics on $G_\mu/K_\mu$.  Also, by setting $h_q=I$, $h_n=\frac{1}{c}I$, $c\ne 0$, we get the rescaled $G_\mu$-invariant metric $\frac{1}{c^2}g_{\ip}$ on $G\mu/K_\mu$, which is isometric to the element of $\hca_{q,n}$ denoted by $c\cdot\mu$ and defined by
\begin{equation}\label{scmu}
c\cdot\mu|_{\kg\times\kg}=\mu, \qquad c\cdot\mu|_{\kg\times\pg}=\mu, \qquad c\cdot\mu|_{\pg\times\pg}=c^2\mu_{\kg}+c\mu_{\pg},
\end{equation}
where the subscripts denote the $\kg$ and $\pg$-components of $\mu|_{\pg\times\pg}$ given by
\begin{equation}\label{decmu}
\mu(X,Y)=\mu_{\kg}(X,Y)+\mu_{\pg}(X,Y), \qquad \mu_{\kg}(X,Y)\in\kg, \quad \mu_{\pg}(X,Y)\in\pg, \qquad\forall X,Y\in\pg.
\end{equation}
The $\RR^*$-action on $\hca_{q,n}$, $\mu\mapsto c\cdot\mu$, can therefore be considered as a geometric rescaling of the homogeneous space $(G_\mu/K_\mu,g_\mu)$.

\subsection{Homogeneous Ricci flow}\label{hrf} (See \cite[Section 3]{homRF} for a more detailed treatment).
Let $(M,g_0)$ be a simply connected homogeneous manifold.  Thus $(M,g_0)$ has a presentation as a homogeneous space of the form $\left(G_{\mu_0}/K_{\mu_0},g_{\mu_0}\right)$ for some $\mu_0\in\hca_{q,n}$, with reductive decomposition $\ggo=\kg\oplus\pg$ (see Section \ref{varhs}).  Let $g(t)$ be the unique homogeneous solution to a {\it normalized Ricci flow}
\begin{equation}\label{RFrn}
\dpar g(t)=-2\ricci(g(t))-2r(t)g(t),\qquad g(0)=g_0,
\end{equation}
for some {\it normalization function} $r(t)$ which may depend on $g(t)$.  For any continuous function $r$, $g(t)$ can be obtained by just rescaling and reparameterizing the time variable of the usual {\it unnormalized} (i.e. $r\equiv 0$) Ricci flow solution.  It follows that $g(t)$ is $G_{\mu_0}$-invariant for all $t$, and thus $(M,g(t))$ is isometric to the homogeneous space $\left(G_{\mu_0}/K_{\mu_0},g_{\ip_t}\right)$, where $\ip_t:=g(t)(o_{\mu_0})$ is a family of inner products on $\pg$.  The Ricci flow equation (\ref{RFrn}) is therefore equivalent to the ODE system
\begin{equation}\label{RFiprn}
\ddt\ip_t=-2\ricci(\ip_t)-2r(t)\ip_t, \qquad \ip_0=\ip,
\end{equation}
where $\ricci(\ip_t):=\ricci(g(t))(o_{\mu_0})$.

\subsection{The bracket flow}\label{lbflow} (See \cite[Sections 3.1-3.3]{homRF} for a more gentle presentation).
The ODE system for a family $\mu(t)\in V_{q+n}=\lamg$
of bilinear and skew-symmetric maps defined by
\begin{equation}\label{BFrn}
\ddt\mu=-\pi\left(\left[\begin{smallmatrix} 0&0\\ 0&\Ricci_{\mu}+rI
\end{smallmatrix}\right]\right)\mu, \qquad \mu(0)=\mu_0,
\end{equation}
is called a {\it normalized bracket flow}. Here $\Ricci_{\mu}$ is defined as in \cite[Section 2.3]{homRF} and coincides with the Ricci operator when $\mu\in\hca_{q,n}$, and $\pi:\glg_{q+n}(\RR)\longrightarrow\End(V_{q+n})$ is the natural representation given by
\begin{equation}\label{actiong}
\pi(A)\mu=A\mu(\cdot,\cdot)-\mu(A\cdot,\cdot)-\mu(\cdot,A\cdot),
\qquad A\in\glg_{q+n}(\RR),\quad\mu\in V_{q+n}.
\end{equation}
We note that $\pi$ is the derivative of the $\Gl_{q+n}(\RR)$-representation defined in (\ref{action}).

A homogeneous space $(G_{\mu(t)}/K_{\mu(t)},g_{\mu(t)})$ can indeed
be associated to each $\mu(t)$ in a bracket flow solution provided that $\mu_0\in\hca_{q,n}$, since it follows that $\mu(t)\in\hca_{q,n}$ for all $t$.  For a given simply connected homogeneous manifold $(M,g_0)=\left(G_{\mu_0}/K_{\mu_0},g_{\mu_0}\right)$, $\mu_0\in\hca_{q,n}$, we can therefore consider the following one-parameter families:
\begin{equation}\label{3rmrn}
(M,g(t)), \qquad \left(G_{\mu_0}/K_{\mu_0},g_{\ip_t}\right), \qquad \left(G_{\mu(t)}/K_{\mu(t)},g_{\mu(t)}\right),
\end{equation}
where $g(t)$, $\ip_t$ and $\mu(t)$ are the solutions to the normalized Ricci flows (\ref{RFrn}), (\ref{RFiprn}) and the normalized bracket flow (\ref{BFrn}), respectively.  Recall that $\ggo=\kg\oplus\pg$ is a reductive decomposition for any of the homogeneous spaces involved.   According to the following result, the Ricci flow and the bracket flow are intimately related.

\begin{theorem}\label{eqflrn}\cite[Theorem 3.10]{homRF}
There exist diffeomorphisms $\vp(t):M\longrightarrow G_{\mu(t)}/K_{\mu(t)}$ such that
$$
g(t)=\vp(t)^*g_{\mu(t)}, \qquad t\in (T_-,T_+),
$$
where $-\infty\leq T_-<0<T_+\leq\infty$ and $(T_-,T_+)$ is the maximal interval of time existence for both flows.  Moreover, if we identify $M=G_{\mu_0}/K_{\mu_0}$, then $\vp(t):G_{\mu_0}/K_{\mu_0}\longrightarrow G_{\mu(t)}/K_{\mu(t)}$ can be chosen as the equivariant diffeomorphism determined by the Lie group isomorphism between $G_{\mu_0}$ and $G_{\mu(t)}$ with derivative $\tilde{h}:=\left[\begin{smallmatrix} I&0\\ 0&h \end{smallmatrix}\right]:\ggo\longrightarrow\ggo$, where $h(t)=d\vp(t)|_{o_{\mu_0}}:\pg\longrightarrow\pg$ is the solution to any of the following ODE systems:

\begin{itemize}
\item[(i)] $\ddt h=-h(\Ricci(\ip_t)+r(t)I)$, $\quad h(0)=I$.

\item[(ii)] $\ddt h=-(\Ricci_{\mu(t)}+r(t)I)h$, $\quad h(0)=I$.
\end{itemize}
The following conditions also hold:
\begin{itemize}
\item[(iii)] $\ip_t=\la h\cdot,h\cdot\ra$.

\item[(iv)] $\mu(t)=\tilde{h}\mu_0(\tilde{h}^{-1}\cdot,\tilde{h}^{-1}\cdot)$.
\end{itemize}
\end{theorem}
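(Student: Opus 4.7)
The plan is to start from a maximal solution $\mu(t)$ of the bracket flow (\ref{BFrn}) and reconstruct everything else from it: first the linear map $h(t)$, then a Lie group isomorphism and hence $\vp(t)$, and finally verify that the induced family of metrics $\vp(t)^*g_{\mu(t)}$ solves the normalized Ricci flow (\ref{RFrn}). Uniqueness of ODE solutions will then identify this construction with the original flows.

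First I would let $\mu(t)$ be the maximal solution of (\ref{BFrn}), and let $h(t)\in\Gl(\pg)$ solve the linear ODE (ii); this $h(t)$ exists on the whole maximal interval of $\mu(t)$. To prove (iv), I would apply the chain rule to the action (\ref{action}): the curve $t\mapsto\tilde h(t)\cdot\mu_0$ satisfies
$$
\ddt(\tilde h\cdot\mu_0)=\pi\bigl(\dot{\tilde h}\,\tilde h^{-1}\bigr)(\tilde h\cdot\mu_0),
$$
and the block form $\tilde h=\left[\begin{smallmatrix}I&0\\0&h\end{smallmatrix}\right]$ together with (ii) give $\dot{\tilde h}\,\tilde h^{-1}=-\left[\begin{smallmatrix}0&0\\0&\Ricci_{\mu(t)}+rI\end{smallmatrix}\right]$. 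Hence $\tilde h\cdot\mu_0$ solves the same initial value problem as $\mu(t)$, and (iv) follows by uniqueness; as a byproduct, $\mu(t)\in\hca_{q,n}$ throughout the interval (the condition (\ref{adkh}) is preserved because $\Ricci(\ip_t)$ commutes with $\ad_{\mu_0}\kg|_\pg$).

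Next, I would use (iv) to exhibit $\tilde h(t)$ as a Lie algebra isomorphism $(\ggo,\mu_0)\to(\ggo,\mu(t))$ that restricts to the identity on $\kg$. Since $G_{\mu_0}$ and $G_{\mu(t)}$ are simply connected, it lifts to a Lie group isomorphism carrying $K_{\mu_0}$ onto $K_{\mu(t)}$, which descends to an equivariant diffeomorphism $\vp(t):G_{\mu_0}/K_{\mu_0}\to G_{\mu(t)}/K_{\mu(t)}$ with $d\vp(t)|_{o_{\mu_0}}=h(t)$. Setting $g(t):=\vp(t)^*g_{\mu(t)}$, one gets a $G_{\mu_0}$-invariant metric whose value at $o_{\mu_0}$ is $\ip_t:=\la h(t)\cdot,h(t)\cdot\ra$, proving (iii). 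To close the loop, I would check that $\ip_t$ solves (\ref{RFiprn}): naturality of Ricci under the isometry $\vp(t)$ yields the key equivariance
$$
\Ricci_{\mu(t)}=h(t)\,\Ricci(\ip_t)\,h(t)^{-1},
$$
which already implies the equivalence of (i) and (ii). Differentiating $\ip_t=\la h\cdot,h\cdot\ra$ via (i) and using symmetry of $\Ricci(\ip_t)$ with respect to $\ip_t$ then gives $\ddt\ip_t=-2\ricci(\ip_t)-2r\ip_t$, and uniqueness of the Ricci flow forces $g(t)$ to be the unique homogeneous Ricci flow starting at $g_{\mu_0}$, so the two maximal intervals must coincide.

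The main obstacle will be precisely the Ricci equivariance displayed above: a priori $\Ricci_{\mu(t)}$ and $\Ricci(\ip_t)$ are built from different pairs (bracket, inner product), and only the compatibility between the $\Gl_{q+n}(\RR)$-action on $\hca_{q,n}$ and the passage to $G$-invariant metrics developed in \cite{spacehm} ensures that $\vp(t)$ is a genuine isometry from $(G_{\mu_0}/K_{\mu_0},g_{\ip_t})$ onto $(G_{\mu(t)}/K_{\mu(t)},g_{\mu(t)})$. Everything else --- the ODE existence, the uniqueness-based identification, the lift to a Lie group isomorphism, and the symmetry argument --- is routine.
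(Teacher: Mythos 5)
The paper itself offers no proof of this statement: it is imported verbatim from \cite[Theorem 3.10]{homRF}, so there is nothing internal to compare against. Judged on its own terms, your proposal is a correct reconstruction of the argument, run in the opposite direction from the original: \cite{homRF} starts from the Ricci flow solution $\ip_t$, defines $h$ by the ODE (i), proves (iii) by differentiating $\la h\cdot,h\cdot\ra$, and then shows $\tilde h\cdot\mu_0$ solves the bracket flow; you start from the bracket flow, define $h$ by (ii), get (iv) by viewing both $\mu(t)$ and $\tilde h\cdot\mu_0$ as solutions of the same linear non-autonomous ODE, and then descend to the metrics. Both directions pivot on exactly the same two facts, which you correctly isolate: the equivariant isometry $\left(G_{h\cdot\mu}/K_{h\cdot\mu},g_{h\cdot\mu}\right)\cong\left(G_{\mu}/K_{\mu},g_{\la h\cdot,h\cdot\ra}\right)$ from Section \ref{varhs}, and the resulting Ricci equivariance $\Ricci_{\mu(t)}=h\,\Ricci(\ip_t)\,h^{-1}$ that makes (i) and (ii) interchangeable. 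Two small points to tighten: first, your parenthetical that (\ref{adkh}) is preserved should be phrased as $[h^th,\ad_{\mu_0}\kg|_\pg]=0$, which follows because $\Ricci(\ip_t)$ commutes with the isotropy representation for every $t$; second, your final uniqueness argument only shows the Ricci flow lives at least as long as the bracket flow, so to get that the maximal intervals coincide you must also observe that the construction reverses (given the Ricci flow, (i) produces $h$ and hence a bracket flow solution on the same interval). Neither issue is a gap in the idea, only in the bookkeeping.
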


It follows that $\mu(t)|_{\kg\times\ggo}=\mu_0|_{\kg\times\ggo}$ for all $t\in (T_-,T_+)$, that is, only $\mu(t)|_{\pg\times\pg}$ is actually evolving, and so the bracket flow equation (\ref{BFrn}) can be rewritten as the system
\begin{equation}\label{BFrnsis}
\left\{\begin{array}{ll}
\ddt\mu_{\kg}=\mu_{\kg}(\Ricci_{\mu}\cdot,\cdot)+\mu_{\kg}(\cdot,\Ricci_{\mu}\cdot) +2r\mu_{\kg}(\cdot,\cdot), & \\
& \mu_{\kg}(0)+\mu_{\pg}(0)=\mu_0|_{\pg\times\pg},\\
\ddt\mu_{\pg}=-\pi_n(\Ricci_{\mu}+rI)\mu_{\pg} =-\pi_n(\Ricci_{\mu})\mu_{\pg} +r\mu_{\pg}. &
\end{array}\right.
\end{equation}
where $\mu_{\kg}$ and $\mu_{\pg}$ are the components of $\mu|_{\pg\times\pg}$ as in (\ref{decmu}) and $\pi_n:\glg_n(\RR)\longrightarrow\End(V_n)$ is the  representation defined in (\ref{actiong}) for $q=0$.

Let $\nu(t)$ denote from now on the unnormalized (i.e. $r\equiv 0$) bracket flow solution with $\nu(0)=\mu_0$.  Then any normalized bracket flow solution is given by
\begin{equation}\label{ctau}
\mu(t)=c(t)\cdot\nu(\tau(t)), \qquad t\in (T_-,T_+),
\end{equation}
for some rescaling $c(t)>0$ (defined by $c'=rc$, $c(0)=1$) and time reparameterization $\tau(t)$ (defined by $\tau'=c^2$, $\tau(0)=0$).  If $(T_-^0,T_+^0)$ denotes the maximal interval of time existence for $\nu(t)$, then $\tau:(T_-,T_+)\longrightarrow(T_-^0,T_+^0)$ is a strictly increasing function, though non-necessarily surjective.

\section{Homogeneous Ricci solitons}\label{hrs}

A Riemannian manifold $(M,g)$ is called {\it Einstein} if $\ricci(g)=cg$ for some $c\in\RR$ (see e.g. \cite{Bss}).  The Einstein equation for an $n$-dimensional homogeneous space is just a system of
$\tfrac{n(n+1)}{2}$ algebraic equations, but unfortunately, quite an involved one,
and it is still open the question of which homogeneous spaces $G/K$ admit a $G$-invariant Einstein metric (see the surveys \cite{Wng, cruzchica} and the references therein).  In the noncompact homogeneous case, the only known non-flat examples until now are all simply connected {\it solvmanifolds}, which are defined in this paper to be solvable Lie groups endowed with a left-invariant metric. According to the long standing {\it Alekseevskii conjecture} (see \cite[7.57]{Bss} and \cite{alek}), asserting that any Einstein connected homogeneous manifold of negative scalar curvature is diffeomorphic to a Euclidean space,  Einstein solvmanifolds might exhaust all the possibilities for noncompact homogeneous Einstein manifolds.

A nice and important generalization of Einstein metrics is the following notion.  A complete Riemannian manifold $(M,g)$ is called a {\it Ricci
soliton} if
\begin{equation}\label{rseq}
\ricci(g)=cg+\lca_Xg, \qquad\mbox{for some}\; c\in\RR, \quad X\in\chi(M),
\end{equation}
where $\lca_Xg$ is the usual Lie derivative of $g$ in the direction of the (complete) vector field $X$ (as in the Einstein case, $c$ is often called the {\it cosmological constant} of the Ricci soliton $g$).  Ricci solitons
correspond to solutions of the Ricci flow that evolve self similarly, that is, only
by scaling and pullback by diffeomorphisms, and often arise as limits of dilations
of singularities of the Ricci flow.  More precisely, $g$ is a Ricci soliton if and
only if the one-parameter family of metrics
\begin{equation}\label{rssol}
g(t)=(-2ct+1)\vp_t^*g,
\end{equation}
is a solution to the Ricci flow for some one-parameter group $\vp_t$ of diffeomorphisms of $M$ (see e.g. \cite{libro,Cao} and the references therein for further information on Ricci solitons).

From results due to Ivey, Naber, Perelman and Petersen-Wylie (see \cite[Section 2]{solvsolitons}), it follows that any
{\it nontrivial} (i.e. non-Einstein and not the product of an Einstein homogeneous manifold with a Euclidean space) homogeneous Ricci soliton must be noncompact, expanding (i.e. $c<0$) and non-gradient (i.e. $X$ is not the gradient field of any smooth function on $M$).  Any known example so far of a nontrivial homogeneous Ricci soliton is isometric to a simply connected {\it solvsoliton}, that is, a solvmanifold $(S,g)$ satisfying
\begin{equation}\label{rsD}
\Ricci(g)=cI+D, \qquad\mbox{for some}\quad c\in\RR,\quad D\in\Der(\sg),
\end{equation}
once the metric $g$ is identified with an inner product on the Lie algebra $\sg$ of $S$.  When $S$ is nilpotent, these metrics are called {\it nilsolitons}
and are precisely the nilpotent parts of Einstein solvmanifolds (see the survey \cite{cruzchica} for further information).  It is proved in \cite{solvsolitons} that, up to isometry, any solvsoliton can be
obtained via a very simple construction from a nilsoliton $(N,g_1)$ together with any
abelian Lie algebra of symmetric derivations of the metric Lie algebra of $(N,g_1)$.
Furthermore, a given solvable Lie group can admit at most one solvsoliton left
invariant metric up to isometry and scaling, and another consequence of
\cite{solvsolitons} is that any Ricci soliton obtained via (\ref{rsD}) is necessarily
simply connected (see also \cite{Lfn,Wll} for examples and classification results on solvsolitons).

The following recently proved result completes the picture for Ricci soliton left-invariant metrics on solvable Lie groups.

\begin{theorem} \cite[Theorem 1.1]{Jbl}
Any (nonflat) Ricci soliton admitting a transitive solvable Lie group of isometries is isometric to a simply connected solvsoliton.
\end{theorem}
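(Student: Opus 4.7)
The strategy is to reduce to the case of a left-invariant metric on a simply connected solvable Lie group and then upgrade the (a priori only semi-algebraic) Ricci soliton structure to the algebraic condition (\ref{rsD}), thereby producing the required solvsoliton presentation.

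First, I would show that $(M,g)$ is isometric to a simply connected solvmanifold $(S',g')$ with $g'$ left-invariant. Ricci solitons pass to Riemannian covers, so we may assume $M$ is simply connected. Let $\tilde S$ denote the universal cover of the given transitive solvable isometry group; it acts transitively by isometries on $M$. Since $\tilde S$ is simply connected and solvable, its maximal compact subgroup is trivial (standard consequence of the Iwasawa--Mostow decomposition for solvable groups). Together with the effectivization argument for the $\tilde S$-action and compactness of the linear isotropy, this forces the isotropy at any basepoint to be trivial; hence the action is simply transitive and yields the desired isometric identification $(M,g)\cong(S',g')$.

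Next, I would invoke Jablonski's semi-algebraic soliton theorem (quoted in the introduction) applied to the presentation of $(M,g)$ with respect to the full isometry group $G=\Iso(M,g)^0$: there is a one-parameter group $\varphi_t\in\Aut(G)$ with $\varphi_t(K)=K$ such that $g(t)=c(t)\varphi_t^*g$ solves the Ricci flow. Using the Gordon--Wilson description of the isometry group of a simply connected solvmanifold---which realizes $G$ as the semidirect product of a compact group with a modification $S''$ of $S'$ so that $(M,g)$ is also isometric to $(S'',g'')$ for some left-invariant $g''$---one translates $\varphi_t$ into a one-parameter subgroup of $\Aut(S'')$. Differentiating the soliton equation at $t=0$ and combining with symmetry of the Ricci tensor gives an algebraic identity of the form $\Ricci(g'')=cI+D$ with $D\in\Der(\sg'')$, exhibiting $(M,g)$ as isometric to a solvsoliton on $S''$. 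Uniqueness of solvsolitons from \cite{solvsolitons} then shows the final isometry class is intrinsic.

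The main obstacle is the second step: the semi-algebraic data lives on $G$, not on any chosen transitive solvable subgroup, and the derivation extracted from $\varphi_t$ need not a priori preserve $\sg''$ nor act symmetrically on it. Overcoming this requires two ingredients: (i) the Gordon--Wilson machinery to single out a distinguished simply transitive solvable subgroup $S''$ that is preserved by every $\varphi_t$, using that automorphisms preserve canonical ideals such as the nilradical; and (ii) a symmetrization argument exploiting $\Ricci^t=\Ricci$ to replace $D$ by its symmetric part---which remains a derivation because its skew-symmetric part is forced to lie in $\ad(\sg'')$---thus yielding the required algebraic solvsoliton identity.
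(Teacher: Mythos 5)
First, a point of order: the paper does not prove this statement at all --- it is quoted verbatim from \cite{Jbl} as an external input, so there is no internal proof to compare yours against. Judged on its own terms, your outline follows the broad shape of the known argument (reduce to a simply transitive solvable group, invoke semi-algebraicity with respect to the full isometry group, extract a derivation), but it contains a genuine gap at the decisive step.

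The gap is item (ii) of your ``main obstacle'' paragraph. You assert that the skew-symmetric part of the extracted derivation $D$ ``is forced to lie in $\ad(\sg'')$,'' so that its symmetric part is again a derivation and the algebraic identity $\Ricci(g'')=cI+D$ follows. That is precisely the hard content of the theorem, and nothing in your sketch delivers it: passing from the semi-algebraic condition $\Ricci=cI+\unm(D_\pg+D_\pg^t)$ to the algebraic condition $\Ricci=cI+D_\pg$ is exactly the question this paper treats as open in general (see the discussion following Proposition \ref{F-const}, and Theorem \ref{diagalg}, where algebraicity is deduced only under the additional ``Ricci flow diagonal'' hypothesis, via normality of $D_\pg$ and the fact that the transpose of a \emph{normal} derivation is a derivation). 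For nilpotent groups the upgrade does work, via the trace identity $0=\tr{\Ricci[D,D^t]}=\unc\|\pi(D^t)\lb\|^2$ recorded in Section \ref{hrs}, but no such one-line symmetrization is available in the solvable case; the actual proof in \cite{Jbl} has to go through the nilradical (reducing to the nilsoliton case by geometric invariant theory) and the structure theory of solvsolitons from \cite{solvsolitons}. A secondary, more repairable issue is in your first step: the isotropy subgroup of a transitive solvable group of isometries need not be compact a priori (the group need not be closed in $\Iso(M,g)$), so ``trivial maximal compact subgroup'' does not immediately yield a simply transitive action --- this already requires the Gordon--Wilson modification machinery --- and you must also argue separately that $M$ itself is simply connected, since the theorem asserts isometry to a \emph{simply connected} solvsoliton rather than to a quotient of one.
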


The concept of solvsoliton can be easily generalized to the class of all homogeneous spaces as follows.

\begin{definition}\label{as}
A homogeneous space $(G/K,g_{\ip})$ with reductive decomposition $\ggo=\kg\oplus\pg$ is said to be an {\it algebraic
soliton} if there exist $c\in\RR$ and $D\in\Der(\ggo)$ such that $D\kg\subset\kg$ and
$$
\Ricci(g_{\ip})=cI+D_{\pg},
$$
where $D_{\pg}:=\proy\circ D|_{\pg}$ and $\proy:\ggo=\kg\oplus\pg\longrightarrow\pg$ is the linear projection.
\end{definition}

We note that Einstein homogeneous manifolds are algebraic solitons with respect to any presentation as a homogeneous space and any reductive decomposition, by just taking $D=0$.

The following result supports in a way the above definition.

\begin{proposition}\label{asrs}
Any simply connected algebraic soliton $(G/K,g_{\ip})$ is a Ricci soliton.
\end{proposition}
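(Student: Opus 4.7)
The plan is to verify the Ricci soliton equation (\ref{rseq}) directly, by exhibiting a complete vector field $X$ on $M := G/K$ such that $\ricci(g_{\ip}) = c\, g_{\ip} + \lca_X g_{\ip}$. The essential input is that the algebraic soliton identity $\Ricci(g_{\ip}) = cI + D_\pg$ forces $D_\pg$ to be symmetric with respect to $\ip$ (since $\Ricci(g_{\ip})$ and $cI$ are), which is what will allow $D_\pg$ to appear as a Lie derivative rather than as an infinitesimal isometry.

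Since $M$ is simply connected, replacing $G$ by the universal cover of its identity component affects neither the metric nor the reductive decomposition, so I may assume $G$ is simply connected; the homotopy sequence of the fibration $K\hookrightarrow G\to M$ then forces $K$ to be connected as well. Because $D\in\Der(\ggo)$ with $D\kg\subset\kg$, the one-parameter subgroup $e^{tD}\subset\Aut(\ggo)$ integrates to a one-parameter group $\tilde\vp_t\in\Aut(G)$ with $\tilde\vp_t(K) = K$, so $\tilde\vp_t$ descends to a one-parameter group $\vp_t\in\Diff(M)$ fixing the origin $o$. Using that $D$ descends to the operator $D_\pg = \proy\circ D|_\pg$ on the quotient $\ggo/\kg\cong\pg$, a direct computation gives $d\vp_t|_o = e^{tD_\pg}$; let $X\in\chi(M)$ denote the (complete) vector field generating $\vp_t$.

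Differentiating $(\vp_t^*g_{\ip})|_o(Y,Z) = \la e^{tD_\pg}Y, e^{tD_\pg}Z\ra$ at $t = 0$ and invoking the symmetry of $D_\pg$ yields $(\lca_X g_{\ip})|_o(Y,Z) = 2\la D_\pg Y, Z\ra$, which combined with $\ricci(g_{\ip})|_o(Y,Z) = c\la Y,Z\ra + \la D_\pg Y, Z\ra$ gives $\ricci(g_{\ip})|_o = c\,g_{\ip}|_o + \tfrac{1}{2}(\lca_X g_{\ip})|_o$. Since every $\vp_t$ is induced by an automorphism of $G$ preserving $K$, the pullback $\vp_t^*g_{\ip}$ is again a $G$-invariant metric on $M$; hence $\lca_X g_{\ip}$ is a $G$-invariant symmetric $2$-tensor, and since both sides of the pointwise identity are $G$-invariant and coincide at $o$, they coincide on all of $M$. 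Replacing $X$ by $X/2$ (still complete) recovers (\ref{rseq}), and completeness of $(M,g_{\ip})$ is automatic for any homogeneous Riemannian manifold.

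The only step that is not purely formal is the identification $d\vp_t|_o = e^{tD_\pg}$, which rests on the fact that a $\kg$-preserving derivation of $\ggo$ descends to $D_\pg = \proy\circ D|_\pg$ on the quotient $\ggo/\kg$. Everything else is either the symmetry observation for $D_\pg$, the integration of $e^{tD}$ to an automorphism of the simply connected $G$, or a standard $G$-invariance argument that reduces the pointwise soliton identity to its value at the origin.
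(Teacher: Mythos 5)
Your proof is correct and follows essentially the same route as the paper's: integrate $e^{tD}$ to automorphisms $\tilde\vp_t$ of the simply connected $G$ preserving the connected $K$, descend to $\vp_t\in\Diff(G/K)$ with $d\vp_t|_o=e^{tD_\pg}$, and use the symmetry of $D_\pg$ to identify $\lca_Xg_{\ip}$ with $\pm 2\la D_\pg\cdot,\cdot\ra$ at the origin. The only differences are cosmetic (an opposite sign convention in the pullback, absorbed by rescaling $X$) plus your added explicit remarks on $G$-invariance propagating the identity from $o$ and on completeness, which the paper leaves implicit.
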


\begin{remark}
The hypothesis of $G/K$ being simply connected is in general necessary in the above proposition, as the case of solvsolitons shows (see \cite[Remark 4.12]{solvsolitons}).
\end{remark}

\begin{proof}
We can assume that $G$ is simply connected and still have that $G/K$ is almost-effective.  Notice that $K$ is therefore connected as $G/K$ is simply connected. Since $D\in\Der(\ggo)$ we have that $e^{tD}\in\Aut(\ggo)$ and thus there exists
$\tilde{\vp}_t\in\Aut(G)$ such that $d\tilde{\vp}_t|_e=e^{tD}$ for all $t\in\RR$.  By using that $K$ is connected and $D\kg\subset\kg$, it is easy to see that
$\tilde{\vp}_t(K)=K$ for all $t$.  This implies that
$\tilde{\vp}_t$ defines a diffeomorphism $\vp_t$ of $M=G/K$ by $\vp_t(uK)=\tilde{\vp}_t(u)K$ for any
$u\in G$, which therefore satisfies at the origin that $d\vp_t|_{o}=e^{tD_{\pg}}$. Let  $X_D$ denote the vector field of $M$ defined by the one-parameter subgroup
$\{\vp_t\}\subset\Diff(M)$, that is, $X_D(p)=\ddt|_0\vp_t(p)$ for
any $p\in M$. It follows from the symmetry of $D_{\pg}$ that
\begin{equation}\label{Lder}
\lca_{X_D}g_{\ip}=\ddt|_0\vp^*_tg_{\ip} =\ddt|_0\la e^{-tD_{\pg}}\cdot,
e^{-tD_{\pg}}\cdot\ra = -2\la D_{\pg}\cdot,\cdot\ra,
\end{equation}
but since $\Ricci=cI+D_{\pg}$, we obtain that $\ricci(g_{\ip})=cg_{\ip}-\unm \lca_{X_D}g_{\ip}$, and so $g_{\ip}$ is a Ricci soliton (see (\ref{rseq})), as was to be shown.
\end{proof}

\begin{remark}\label{Dkcero}
In Definition \ref{as}, $D\kg=0$ necessarily holds. Indeed, we have that
$$
\ad{DZ}|_\pg=[D|_\pg,\ad{Z}|_\pg]=[\Ricci(g_{\ip}),\ad{Z}|_\pg]=0, \qquad\forall Z\in\kg,
$$
and thus $D\kg=0$ by almost-effectiveness.
\end{remark}

From the proof of Proposition \ref{asrs}, one may perceive that there is a more general way to consider a homogeneous Ricci soliton `algebraic', in the sense that the algebraic structure of some of its presentations as a homogeneous space be strongly involved.

\begin{definition}\label{sas}\cite[Definition 1.4]{Jbl}
A homogeneous space $(G/K,g)$ is called a {\it semi-algebraic
soliton} if there exists a one-parameter family $\tilde{\vp}_t\in\Aut(G)$ with $\tilde{\vp}_t(K)=K$ such that
$$
g(t)=c(t)\vp_t^*g,
$$
is a solution to the (unnormalized) Ricci flow equation (\ref{RFrn}) starting at $g(0)=g$ for some scaling function $c(t)>0$, where $\vp_t\in\Diff(G/K)$ is the diffeomorphism determined by $\tilde{\vp}_t$.
\end{definition}

As the following example shows, a homogeneous Ricci soliton is not always semi-algebraic with respect to a given presentation as a homogenous space.

\begin{example}\label{nosemi1}
The direct product $S=S_1\times S_2$ of a completely solvable nonflat solvsoliton $S_1$ and a flat nonabelian solvmanifold $S_2$ is a Ricci soliton which is not semi-algebraic when presented as a left-invariant metric on $S$ (see \cite[Example 1.3]{Jbl}).
\end{example}

The following result confirms the protagonism of the algebraic side of homogeneous manifolds in regard to Ricci soliton theory.

\begin{theorem}\cite[Proposition 2.2]{Jbl}\label{semi}
Any homogeneous Ricci soliton $(M,g)$ is a semi-algebraic soliton with respect to its full
isometry group $G=\Iso(M,g)$.  Moreover, $\tilde{\vp}_t$ can be chosen to be a one-parameter subgroup of $\Aut(G)$ such that $\tilde{\vp}_t|_{K_0}=id$, where $K_0$ is the identity component of $K$.
\end{theorem}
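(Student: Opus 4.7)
The plan is to start from the self-similar Ricci flow solution $g(t)=c(t)\,\vp_t^*g$ of the homogeneous Ricci soliton $(M,g)$, where $\vp_t\in\Diff(M)$ is the one-parameter group generated by the soliton vector field $X$ and $c(t)>0$. My first step would be to show that $\vp_t$ normalizes $G=\Iso(M,g)$ in $\Diff(M)$: since the Ricci flow commutes with isometries we have $\Iso(M,g(t))=G$ for all $t$, and the identity $\vp_t^*g=c(t)^{-1}g(t)$ makes $\vp_t$ an isometry $(M,c(t)^{-1}g(t))\to(M,g)$, so conjugation by $\vp_t$ sends $\Iso(M,c(t)^{-1}g(t))=G$ onto $\Iso(M,g)=G$. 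Setting $\tilde{\vp}_t(\sigma):=\vp_t\sigma\vp_t^{-1}$ therefore defines a one-parameter \emph{subgroup} of $\Aut(G)$, the group law being inherited from $\vp_t$.

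To obtain $\tilde{\vp}_t(K)=K$ as required by Definition \ref{sas}, I would modify $\vp_t$ so that it fixes the basepoint $o$. By transitivity of $G$ there exists a Killing field $Y\in\ggo$ with $Y(o)=X(o)$; setting $X':=X-Y$ gives $X'(o)=0$ and $\lca_{X'}g=\lca_{X}g$ (since $Y$ is Killing), so $X'$ still satisfies the same soliton identity. Its flow $\vp'_t$ is then a one-parameter group fixing $o$, and the self-similar identity $g(t)=c(t)(\vp'_t)^*g$ persists. Because $\vp'_t$ fixes $o$ and is equivariant for the induced $\tilde{\vp}'_t$, it follows that $\tilde{\vp}'_t(K)=K$, delivering the basic semi-algebraic soliton structure.

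For the stronger conclusion $\tilde{\vp}'_t|_{K_0}=id$ I would perform a further modification inside $K_0$. Restricted to $K_0$, $\tilde{\vp}'_t$ is a continuous one-parameter subgroup of $\Aut(K_0)$, and two structural facts about this compact group combine nicely: on the central torus $Z(K_0)^0$, any one-parameter subgroup of $\Aut(K_0)$ is automatically trivial because $\Aut(Z(K_0)^0)\cong\Gl(r,\ZZ)$ is discrete; on the semisimple factor $[K_0,K_0]$ every Lie algebra derivation is inner, so $\tilde{\vp}'_t|_{[K_0,K_0]}=\Ad(k_t)|_{[K_0,K_0]}$ for a one-parameter subgroup $k_t\subset[K_0,K_0]\subset K_0$ obtained by lifting via compactness. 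Putting these together one gets $\tilde{\vp}'_t|_{K_0}=\Ad(k_t)|_{K_0}$. Setting $\vp''_t:=k_t^{-1}\vp'_t$ and checking directly from $\tilde{\vp}'_s(k_t)=k_sk_tk_s^{-1}$ that $\vp''_t$ is still a one-parameter group that fixes $o$ and preserves the identity $g(t)=c(t)(\vp''_t)^*g$ (using that $k_t\in G$ acts by isometries), the induced automorphism $\tilde{\vp}''_t$ is trivial on $K_0$ and still sends $K$ to $K$.

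The main obstacle is the bookkeeping in the final modification: one must simultaneously maintain the one-parameter group law for the new diffeomorphisms, preserve the self-similar Ricci flow identity, and control the torus part of $K_0$ (where inner modifications have no effect). The hypothesis that $G$ is the \emph{full} isometry group is essential throughout: it forces $\vp_t$ to genuinely normalize $G$ (rather than a proper subgroup of isometries), makes the isotropy representation of $K$ faithful, and allows the compact Lie algebra decomposition $\kg=\zg(\kg)\oplus[\kg,\kg]$ to be exploited in the reduction.
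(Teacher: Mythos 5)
The paper does not actually prove this statement: it is imported verbatim from Jablonski's paper as \cite[Proposition 2.2]{Jbl}, so there is no internal proof to compare against. That said, your reconstruction follows what is essentially the argument of \cite{Jbl}, and its outline is sound: normalization of $G=\Iso(M,g)$ by the soliton flow, correction of the soliton field by a Killing field to fix the basepoint, and a final inner correction inside $K_0$ using that a one-parameter group of automorphisms of a compact group is inner (your torus/semisimple split of $\kg=\zg(\kg)\oplus[\kg,\kg]$ is just an explicit way of saying that $\mathrm{Out}(K_0)$ is discrete). The verification that $\vp''_t=k_t^{-1}\vp'_t$ is again a one-parameter group via $\tilde{\vp}'_t(k_s)=k_tk_sk_t^{-1}$ is correct, and your closing remark about why the \emph{full} isometry group is needed is exactly the right point (compare Example \ref{nosemi1}).

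Two steps deserve more care than you give them. First, ``the Ricci flow commutes with isometries'' only yields the inclusion $\Iso(M,g_0)\subseteq\Iso(M,g(t))$, and only via forward uniqueness of complete bounded-curvature Ricci flow (Chen--Zhu); to get the equality $\Iso(M,g(t))=G$ you should apply the resulting inclusion $G\subseteq\vp_s^{-1}G\vp_s$ for \emph{both} signs of $s$ (using that the reparameterized soliton flow realizes every $s\in\RR$) and combine the two inclusions, rather than invoking backward preservation of isometries as if it were automatic. Second, you replace $X$ by $X'=X-Y$ and use its flow, but the difference of two complete vector fields need not be complete in general; here one must note that $\lca_{X'}g=\ricci(g)-cg$ is a bounded tensor on a complete homogeneous manifold, which bounds the metric distortion of the local flow of $X'$ and hence forces completeness. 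With these two points patched, the argument goes through.
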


It follows from (\ref{Lder}) that if $\left(G/K,g_{\ip}\right)$ is a semi-algebraic soliton with reductive decomposition $\ggo=\kg\oplus\pg$, then
\begin{equation}\label{alg2}
\Ricci(g_{\ip})=cI+\unm\left(D_{\pg}+D_{\pg}^t\right), \qquad\mbox{for some} \quad c\in\RR, \quad D\in\Der(\ggo), \quad D\kg=0,
\end{equation}
where actually $D=\ddt|_0\tilde{\vp}_t$ (see also \cite[Proposition 2.3]{Jbl}).  Conversely, if condition (\ref{alg2}) holds for some reductive decomposition and $G/K$ is simply connected, then one can prove in much the same way as Proposition \ref{asrs} that $\left(G/K,g_{\ip}\right)$ is indeed a Ricci soliton with $\ricci(g_{\ip})=cg_{\ip}-\unm \lca_{X_D}g_{\ip}$.

\begin{example}\label{nosemi2}
The following example was generously provided to us by M. Jablonski and it is given by the $6$-dimensional solvmanifold whose metric Lie algebra has an orthonormal basis $\{X_1,Y_1,Z_1,X_2,Y_2,Z_2\}$ with Lie bracket
$$
[X_1,Y_1]=Z_1, \quad [X_1,X_2]=Y_2, \quad [X_1,Y_2]=-X_2, \quad [X_2,Y_2]=Z_2.
$$
It is easy to see that it is not a semi-algebraic soliton.  Indeed, the Ricci operator is $\Ricci=\diag(-\unm,-\unm,\unm,-\unm,-\unm,\unm)$ and if (\ref{alg2}) holds, then $c=-\unm$ since $D$ must leave the nilradical $\la X_2,\dots,Z_2\ra$ invariant.  Now, restricted to the Lie ideal $\la X_2,Y_2,Z_2\ra$, the diagonal part of $D$ has the form $\diag(a,b,a+b)$, and thus $a=b=0$ and $-\unm+a+b=\unm$, a contradiction.  However, it is easy to prove that it is isometric to the nilsoliton $H_3\times H_3$, where $H_3$ denotes the $3$-dimensional Heisenberg group.  The bracket flow evolution of this solvmanifold will be analyzed in Section \ref{evol-nosemi}.
\end{example}

The special reductive decomposition given in Lemma \ref{Bkp0} establishes some constraints on the behavior of derivations, and consequently on the structure of semi-algebraic solitons.

\begin{lemma}\label{Dkp}
Let $(G/K,g_{\ip})$ be a homogeneous space with reductive decomposition $\ggo=\kg\oplus\pg$, and assume in addition that $B(\kg,\pg)=0$, where $B$ is the Killing form of $\ggo$.  If $D \in \Der(\ggo)$ satisfies $D \kg \subset \kg$, then $D\pg \subset \pg$.
\end{lemma}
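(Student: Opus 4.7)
The plan is to exploit the fundamental identity that every derivation $D$ of a Lie algebra is skew with respect to the Killing form, namely $B(DX,Y) + B(X,DY) = 0$ for all $X,Y \in \ggo$. Combined with the two hypotheses $D\kg \subset \kg$ and $B(\kg,\pg) = 0$, this should force the $\kg$-component of $DX$ (for $X \in \pg$) to be Killing-orthogonal to all of $\kg$, which will be enough to conclude by nondegeneracy.

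More concretely, first I would fix $X \in \pg$ and write the decomposition $DX = (DX)_{\kg} + (DX)_{\pg}$ according to $\ggo = \kg \oplus \pg$. Pairing with an arbitrary $Z \in \kg$ via $B$, the derivation identity gives
\begin{equation*}
B(DX, Z) = -B(X, DZ).
\end{equation*}
The right-hand side vanishes because $DZ \in \kg$ (by hypothesis) and $B(\pg, \kg) = 0$. The left-hand side equals $B((DX)_{\kg}, Z)$, again using $B(\pg, \kg) = 0$. Therefore $B((DX)_{\kg}, Z) = 0$ for every $Z \in \kg$.

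To finish, I need that $B$ restricted to $\kg \times \kg$ is nondegenerate; this will let me conclude $(DX)_{\kg} = 0$, i.e. $DX \in \pg$. But this nondegeneracy was already noted inside the proof of Lemma \ref{Bkp0}: since $\overline{\Ad(K)}$ is compact in $\Gl(\ggo)$ and the isotropy representation is faithful by almost-effectiveness, $B|_{\kg \times \kg}$ is in fact negative definite. Invoking this, the proof is complete.

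I do not anticipate any serious obstacle here; the only subtle point is that one must remember to use almost-effectiveness (which is a standing assumption throughout the paper) in order to guarantee the nondegeneracy of $B|_{\kg \times \kg}$. Without it, the statement could genuinely fail: for example, if $\kg$ were contained in the kernel of $B$, one could not rule out $\kg$-components appearing in $DX$.
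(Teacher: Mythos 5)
Your proof is correct and follows essentially the same route as the paper: both rest on the skew-symmetry of derivations with respect to the Killing form together with $D\kg\subset\kg$, $B(\kg,\pg)=0$, and the negative definiteness of $B|_{\kg\times\kg}$. The only cosmetic difference is that the paper pairs $DX$ directly with its own $\kg$-component to get $B((DX)_\kg,(DX)_\kg)=0$, whereas you pair with an arbitrary $Z\in\kg$ and invoke nondegeneracy.
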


\begin{proof}
For $X\in \pg$, $Z = DX$, we write $Z = Z_\kg + Z_\pg$ according to the decomposition $\ggo=\kg\oplus\pg$.  By using that $D \kg \subset \kg$ and $B(\kg,\pg)=0$, we get
\[
0 = B(Z, Z_\kg) + B(X, DZ_\kg) = B(Z_\kg, Z_\kg),
\]
which implies that $Z_\kg =0$ since $B|_{\kg \times \kg}$ is negative definite.
\end{proof}

\begin{corollary}\label{semiB}
Let $(G/K,g_{\ip})$ be a semi-algebraic soliton with reductive decomposition $\ggo=\kg\oplus\pg$, and assume in addition that $B(\kg,\pg)=0$.  Then,
$$
\Ricci(g_{\ip})=cI+\unm\left(D_{\pg}+D_{\pg}^t\right), \qquad\mbox{with} \qquad D = \left[\begin{smallmatrix} 0&0\\ 0&D_\pg\\\end{smallmatrix}\right]\in\Der(\ggo).
$$
\end{corollary}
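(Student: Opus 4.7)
The plan is to deduce this corollary as an immediate consequence of the structural result (\ref{alg2}) on semi-algebraic solitons combined with Lemma \ref{Dkp}. More specifically, the only new content beyond (\ref{alg2}) is that under the extra hypothesis $B(\kg,\pg)=0$, the derivation $D$ already has the block-diagonal form asserted, i.e.\ $D\pg\subset\pg$ (the vanishing $D\kg=0$ being automatic from (\ref{alg2})).

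First I would invoke (\ref{alg2}): since $(G/K,g_{\ip})$ is a semi-algebraic soliton with the given reductive decomposition, there exist $c\in\RR$ and $D\in\Der(\ggo)$ with $D\kg=0$ such that
$$
\Ricci(g_{\ip})=cI+\tfrac{1}{2}\bigl(D_{\pg}+D_{\pg}^t\bigr),
$$
where as before $D_{\pg}=\proy\circ D|_{\pg}$ with $\proy:\ggo\to\pg$ the projection along $\kg$.

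Next, I would apply Lemma \ref{Dkp}: the condition $D\kg=0\subset\kg$ and the hypothesis $B(\kg,\pg)=0$ together imply $D\pg\subset\pg$. Hence on the decomposition $\ggo=\kg\oplus\pg$, $D$ is block-diagonal with vanishing $\kg$-block, so
$$
D=\left[\begin{smallmatrix} 0&0\\ 0&D_{\pg}\\ \end{smallmatrix}\right],
$$
and moreover the restriction $D|_{\pg}$ genuinely coincides with $D_{\pg}$ (no projection correction is needed because $D\pg$ already sits in $\pg$). Substituting into the previous equation yields the claimed identity.

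There is essentially no obstacle: the corollary is a direct splice of (\ref{alg2}) with Lemma \ref{Dkp}, and the only substantive observation is that the special reductive complement of Lemma \ref{Bkp0} (which exists precisely when $B(\kg,\pg)=0$) forces any derivation preserving $\kg$ to also preserve $\pg$. The upshot is that for semi-algebraic solitons one may always arrange the associated derivation to be block-diagonal with respect to a convenient reductive decomposition, which is what will be exploited in later sections.
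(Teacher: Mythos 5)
Your proposal is correct and is exactly the intended argument: the paper states this as an immediate corollary of Lemma \ref{Dkp} applied to the derivation $D$ furnished by (\ref{alg2}) (which satisfies $D\kg=0\subset\kg$), so that $B(\kg,\pg)=0$ forces $D\pg\subset\pg$ and hence the block-diagonal form. No further comment is needed.
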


\begin{example}\label{Dpnop}
It may happen that $\pg$ is not preserved by the derivation $D$. Consider for instance any nilsoliton $(\ngo,\ip)$, say with $\Ricci_\ngo = cI + D_0$, $D_0 \in \Der(\ngo)$.  Assume there exists a nonzero $B\in \Der(\ngo)\cap\sog(\ngo,\ip)$.  This gives rise to a presentation of the nilsoliton as a homogeneous space with one-dimensional isotropy and reductive decomposition $\ggo = \kg \oplus \ngo$, where $\kg = \RR B$ and $B$ is acting on $\ngo$ as usual.  Now suppose that there is a nonzero $X\in\ngo$ such that $BX=0$, and let $\pg \subset \ggo$ be the subspace spanned by $\alpha$, where $\alpha$ is the set obtained by replacing $X$ with $X+B$ in an orthonormal basis $\beta$ of $(\ngo,\ip)$ containing $X$.  We see that $\ggo = \kg \oplus \pg$ is also a reductive decomposition. Under the natural identification of $\pg$ with $\ngo$, the basis $\alpha$ and $\beta$ turn out to be identified, and then the Ricci operator $\Ricci_\pg$ (according to $\ggo = \kg \oplus \pg$) is given by
\[
\Ricci_\pg = cI + D_1, \quad D_1\in \End(\pg), \quad [D_1]_\alpha = [D_0]_\beta,
\]
which implies that it is still an algebraic soliton with respect to the reductive decomposition $\ggo=\kg\oplus\pg$.  Now assume that $D_1 = \unm(D_\pg + D_\pg^t)$, for some $D:= \left[\begin{smallmatrix}0 & 0 \\ 0 & D_\pg \end{smallmatrix}\right] \in \Der(\ggo)$. Then, by using that $D \pg \subset \pg \cap \ngo$ (in fact, $D\ggo \subset \ngo$ since $\ggo$ is solvable and $\ngo$ is its nilradical) and $\pg \cap \ngo \perp (X+B)$, we obtain that
\[
\la D_0 X, X \ra = \la D_1 (X+B), (X+B)\ra  = \la D (X+B), (X+B)\ra = 0,
\]
which contradicts the fact that $D_0$ is positive definite (see e.g. \cite[Section 2]{cruzchica}).  Thus this reductive decomposition does not allow us to present this nilsoliton as a semi-algebraic soliton with a derivation leaving $\pg$ invariant.  The bracket flow evolution of this example in the case $\ngo$ is the 3-dimensional Heisenberg Lie algebra will be studied in Section \ref{evol-Dpnop}.
\end{example}

In \cite{alek}, some structural results on Lie theoretical aspects of semi-algebraic solitons are given.  In the case of nilmanifolds, or more precisely, when $G$ is nilpotent and simply connected and $K$ is trivial, condition (\ref{alg2}) is equivalent to $(G,\ip)$ being a nilsoliton, since $D^t$ turns out to be a derivation in this case.  Indeed, if $\Ricci(\ip)=cI+D+D^t$, then by using \cite[(19)]{homRF} one obtains that
$$
0=\tr{\Ricci(\ip)[D,\Ricci(\ip)]}=\tr{\Ricci(\ip)[D,D^t]}=\unc\la\pi(D^t)\lb,\pi(D^t)\lb\ra,
$$
and hence $D^t\in\Der(\ggo)$.   This fixes a gap in the proof of \cite[Proposition 1.1]{soliton}, which was kindly pointed out to the second author by M. Jablonski.

\begin{figure}\label{HRSfig}
    \includegraphics[width=\textwidth]{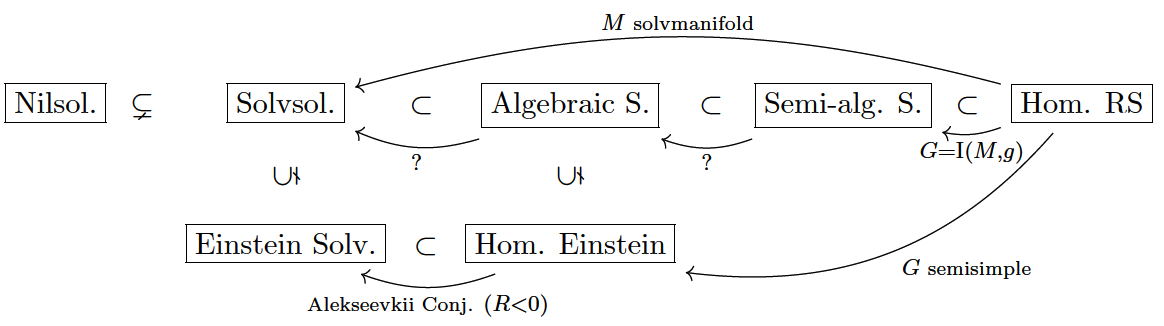}
\caption{Homogeneous Ricci solitons}
\end{figure}


It is proved in \cite[Theorem 1.6]{Jbl} that any homogeneous Ricci soliton admitting a transitive semi-simple group of isometries must be Einstein.

As far as we know, beyond solvmanifolds, the current status of knowledge on simply connected (nontrivial) homogeneous Ricci solitons can be summarized as in Figure \ref{HRSfig}. Recall that the only known examples for now are all isometric to solvsolitons, i.e. left-invariant algebraic solitons on solvable Lie groups.

\section{Algebraic solitons and the bracket flow}\label{ASBF}

We study in this section how semi-algebraic solitons evolve according to the bracket flow.  Algebraic solitons are proved to be the only possible limits, backward and forward, for any bracket flow solution.  This fact and the equivalence between the bracket and Ricci flows (see Theorem \ref{eqflrn}) suggest that algebraic solitons might exhaust the class of all homogeneous Ricci solitons (up to isometry).

\begin{proposition}\label{limrs}
Let $\mu(t)$ be a solution to any normalized bracket flow (see {\rm (\ref{BFrn})} or {\rm (\ref{BFrnsis})}).
\begin{itemize}
\item[(i)] If $\mu_0$ is a fixed point (i.e. $\mu(t)\equiv\mu_0$), then $\left(G_{\mu_0}/K_{\mu_0},g_{\mu_0}\right)$ is an algebraic soliton with $\Ricci_{\mu_0}=cI+D_{\pg}$, for some $c\in\RR$ and such that $\left[\begin{smallmatrix} 0&0\\ 0&D_{\pg} \end{smallmatrix}\right]\in\Der(\ggo,\mu_0)$.


\item[(ii)] If $\mu(t)\to\lambda\in\hca_{q,n}$, as $t\to T_{\pm}$, then $T_{\pm}=\pm\infty$ and $\left(G_\lambda/K_\lambda,g_\lambda\right)$ is an algebraic soliton as in part (i).

\item[(iii)] Assume that $\mu(t)\to\lambda\in\hca_{q,n}$, as $t\to\pm\infty$, with $\lambda|_{\pg\times\pg}\ne 0$.  Then the limit $\tilde{\lambda}$ of any other $\tilde{r}$-normalized bracket flow solution necessarily satisfies $\tilde{\lambda}=c\cdot\lambda$ for some $c\geq 0$.  In particular, $\tilde{\lambda}$ is either flat ($c=0$) or homothetic to $\lambda$ ($c>0$).
\end{itemize}
\end{proposition}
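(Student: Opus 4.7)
For part (i), the plan is to read off the conclusion directly from the bracket flow equation (\ref{BFrn}). A fixed point $\mu_0$ satisfies
\[
\pi\left(\left[\begin{smallmatrix} 0&0\\ 0&\Ricci_{\mu_0}+rI \end{smallmatrix}\right]\right)\mu_0 = 0,
\]
and by the definition (\ref{actiong}) of $\pi$, this is equivalent to $D := \left[\begin{smallmatrix} 0&0\\ 0&\Ricci_{\mu_0}+rI \end{smallmatrix}\right]$ being a derivation of $(\ggo,\mu_0)$. Setting $c := -r(\mu_0)$ and $D_\pg := \Ricci_{\mu_0}+r(\mu_0)I$ then yields $\Ricci_{\mu_0} = cI + D_\pg$ with $D \in \Der(\ggo,\mu_0)$ of the stated block form.

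For part (ii), I would first rule out finite $T_\pm$. Since $\mu(t) \to \lambda \in \hca_{q,n}$ and the right-hand side of (\ref{BFrn}) is a smooth vector field on $\hca_{q,n}$, short-time existence at $\lambda$ together with continuous dependence on initial conditions would extend the solution past $T_+$, contradicting maximality; hence $T_+ = +\infty$, and analogously $T_- = -\infty$. To show $\lambda$ is a fixed point, I would apply the standard $\omega$-limit argument: integrating $\ddt \mu = F(\mu)$ over $[t,t+1]$, the LHS $\mu(t+1) - \mu(t)$ tends to $0$, while the integrand $F(\mu(s))$ converges uniformly on $[t,t+1]$ to $F(\lambda)$ by continuity of $F$, giving $F(\lambda) = 0$. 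Then part (i) applies.

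For part (iii), the key tool is (\ref{ctau}), which expresses both $\mu(t) = c(t)\cdot\nu(\tau(t))$ and $\tilde\mu(t) = \tilde c(t)\cdot\nu(\tilde\tau(t))$ in terms of the same unnormalized solution $\nu$ starting at $\mu_0$. Since the rescaling action of $\RR_{>0}$ on brackets is multiplicative, matching the parameters $\tau(t) = \tilde\tau(\tilde t)$ gives $\tilde\mu(\tilde t) = a\cdot \mu(t)$ with $a = \tilde c(\tilde t)/c(t)$. Let $S, \tilde S \in (0, T_+^0]$ be the limits of $\tau(t), \tilde\tau(t)$ as $t \to \infty$. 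In the case $S = \tilde S$, for any $t_n \to \infty$ I find $\tilde t_n \to \infty$ with $\tilde\tau(\tilde t_n) = \tau(t_n)$, so $\tilde\mu(\tilde t_n) = a_n\cdot\mu(t_n)$; the hypothesis $\lambda|_{\pg\times\pg}\ne 0$ forces the positive sequence $a_n$ to stay bounded (else $a_n^2 \lambda_\kg + a_n \lambda_\pg$ would blow up in the formula (\ref{scmu})), so a subsequence $a_n \to c \in [0,\infty)$ gives $\tilde\lambda = c\cdot\lambda$.

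The hard part will be the case $S \ne \tilde S$, which I plan to handle using the following structural observation: by (i), the unnormalized bracket flow vector at $\lambda$ equals $r(\lambda)\,\pi\left(\left[\begin{smallmatrix} 0&0\\0&I\end{smallmatrix}\right]\right)\lambda$, which is tangent to the rescaling orbit $\RR_{>0}\cdot\lambda$. By equivariance of the bracket flow under rescaling and uniqueness of ODE solutions, the unnormalized flow through any point of $\RR_{>0}\cdot\lambda$ remains in $\RR_{>0}\cdot\lambda$. Assuming without loss of generality that $S < \tilde S \leq T_+^0$, the value $\nu(S)$ is well-defined; passing to the limit in $c(t)\cdot\nu(\tau(t)) \to \lambda$, the hypothesis $\lambda|_{\pg\times\pg}\ne 0$ forces $c(t)\to c^*\in(0,\infty)$, placing $\nu(S) = (c^*)^{-1}\cdot\lambda$ on $\RR_{>0}\cdot\lambda$. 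Hence $\nu(s)\in\RR_{>0}\cdot\lambda$ for all $s \geq S$ in its interval of existence, and since $\tilde\tau(t) > S$ eventually, $\tilde\mu(t) = \tilde c(t)\cdot\nu(\tilde\tau(t)) \in \RR_{>0}\cdot\lambda$; taking the limit yields $\tilde\lambda \in \overline{\RR_{>0}\cdot\lambda} = \RR_{\ge 0}\cdot\lambda$ as required.
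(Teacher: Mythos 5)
Parts (i) and (ii) of your proposal are correct and essentially coincide with the paper, which only writes out the computation behind (i) and leaves implicit exactly the standard ODE facts you supply for (ii). For part (iii) you take a genuinely different route: the paper cites \cite[Lemma 3.9]{homRF} to know that \emph{both} reparametrizations $\tau$ and $\tilde\tau$ exhaust the full interval $(T_-^0,T_+^0)$ of the unnormalized solution, then projects both curves to the non-Hausdorff quotient $V_{q+n}/\RR_{>0}$ and concludes from the structural fact that the only non-separable pairs there are of the form $[v]$, $[0\cdot v]$. Your parameter-matching argument (case $S=\tilde S$) together with the orbit-invariance argument is a self-contained substitute that avoids both the cited lemma and the topological separation claim; the price is a case analysis, and that is where the problem lies.

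The gap is the reduction ``without loss of generality $S<\tilde S$'': the hypothesis $\lambda|_{\pg\times\pg}\ne 0$ is imposed on only one of the two solutions, so their roles are not symmetric and you cannot swap them for free. Moreover the subcase you actually treat, $S<\tilde S$, is vacuous: if $S<T_+^0$ then $S<\infty$, so $\int_0^\infty c(t)^2\,dt=\lim\tau(t)=S<\infty$ forces $\liminf c(t)=0$, and along such a sequence $\mu(t_k)=c(t_k)\cdot\nu(\tau(t_k))\to 0\cdot\nu(S)$, which has vanishing $\pg\times\pg$ part, contradicting $\lambda|_{\pg\times\pg}\ne 0$; hence $S=T_+^0$ always. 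The case that genuinely needs an argument when $S\ne\tilde S$ is therefore $\tilde S<S$, where $\nu(\tilde S)$ need not lie on $\RR_{>0}\cdot\lambda$ and your orbit argument does not apply. The fix is short: the same integrability observation applied to $\tilde c$ gives $\liminf\tilde c=0$ whenever $\tilde S<T_+^0$, so that $\tilde\lambda=0\cdot\nu(\tilde S)=0\cdot\lambda$ (recall $\nu(s)|_{\kg\times\ggo}\equiv\mu_0|_{\kg\times\ggo}$, so all the flat degenerations along the flow coincide); alternatively, first dispose of the case $\tilde\lambda|_{\pg\times\pg}=0$, which yields $\tilde\lambda=0\cdot\lambda$ for the same reason, after which the two solutions do play symmetric roles and your WLOG becomes legitimate. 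Two smaller points: your claim that $c(t)\to c^*\in(0,\infty)$ when $\tau(t)\to S<T_+^0$ requires first ruling out $\nu(S)|_{\pg\times\pg}=0$ (which would make $\nu$ constant and flat by uniqueness, contradicting the hypothesis), and only a convergent subsequence of $c(t)$ is actually needed there; and the equivariance of the unnormalized bracket flow under the rescaling (\ref{scmu}), which you invoke, should at least be recorded (it follows from $\Ricci_{a\cdot\mu}=a^2\Ricci_\mu$ and the form of $\pi$).
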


\begin{proof}
Let $\lambda$ be a fixed point for some normalized bracket flow of the form (\ref{BFrn}) (i.e. $\mu(t)\equiv\mu_0=\lambda$).  Thus
$$
-\pi\left(\left[\begin{smallmatrix} 0&0\\ 0&\Ricci_{\lambda}+r(0)I
\end{smallmatrix}\right]\right)\lambda=0,
$$
from which we deduce that $\left(G_\lambda/K_\lambda,g_\lambda\right)$ is an algebraic soliton with $c=-r(0)$ and $D=\left[\begin{smallmatrix} 0&0\\ 0&\Ricci_{\lambda}+r(0)I\end{smallmatrix}\right]$ (see {\rm Definition \ref{as}}).  This proves parts (i) and (ii).

Let us now prove (iii).  It follows from \cite[Lemma 3.9]{homRF} that both time reparametrizations $\tau(t)$ and $\tilde{\tau}(t)$ converge to $T^0_{\pm}$, as $t\to\pm\infty$ (see (\ref{ctau})).  We project the curves determined by the two bracket flow solutions onto the quotient $V_{q+n}/\RR_{>0}$, where the $\RR_{>0}$-action is given by the rescaling (\ref{scmu}), and denote by $[v]$ the equivalence class of a vector $v\in V_{q+n}$.  By (\ref{ctau}), we obtain that $[\nu(s)]$ converges to both $[\lambda]$ and $[\tilde{\lambda}]$, as $s\to T_{\pm}^0$, relative to the quotient topology.  As the pairs of points which can not be separated by disjoint open sets in $V_{q+n}/\RR_{>0}$ are all of the form $[v]$, $[0\cdot v]$ for some $v\in V_{q+n}$, either $\tilde{\lambda}=0\cdot\lambda$ or $[\tilde{\lambda}]=[\lambda]$, as was to be shown.
\end{proof}

As an application of Theorem \ref{eqflrn}, unnormalized Ricci and bracket flow solutions are proved in what follows to have a very simple form for semi-algebraic and algebraic solitons.

\begin{proposition}\label{saBF}
Let $\left(G_{\mu_0}/K_{\mu_0},g_{\mu_0}\right)$, $\mu_0 \in \hca_{q,n}$, be a homogeneous space that is a semi-algebraic soliton, say with
$$
\Ricci_{\mu_0}=cI+\unm(D_\pg+D_\pg^t), \qquad c\in\RR, \qquad D:=\left[\begin{smallmatrix} 0&\ast\\ 0&D_\pg \end{smallmatrix}\right] \in\Der(\ggo,\mu_0).
$$
Then the unnormalized bracket flow solution to \eqref{BFrn} (i.e. with $r\equiv 0$) is given by
\begin{equation}\label{saevol}
\nu(t) = (-2ct+1)^{-1/2} \cdot \left[\begin{smallmatrix} I&0\\ 0& e^{s(t)A} e^{-s(t)D_\pg} \end{smallmatrix}\right] \cdot \mu_0, \qquad t\in\left\{\begin{array}{lcl} (\tfrac{1}{2c},\infty), && c<0, \\ (-\infty,\tfrac{1}{2c}), && c>0, \\ (-\infty,\infty), && c=0, \end{array}\right.
\end{equation}
where $ A = \unm(D_\pg - D_\pg^t)$ and $s(t) = -\tfrac{1}{2c} \ln (-2ct+1) $ (for $c=0$, $s(t)\equiv 1$).

Conversely, if the unnormalized bracket flow $\nu(t)$ evolves as in \eqref{saevol}, then $\left(G_{\mu_0}/K_{\mu_0},g_{\mu_0}\right)$ is a semi-algebraic soliton.  The corresponding derivation $\tilde{D}$ satisfies that $A = \unm(\tilde{D}_\pg - \tilde{D}_\pg^t)$, although possibly $\tilde{D}_{\pg}\ne D_\pg$.
\end{proposition}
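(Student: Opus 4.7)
The plan is to treat the two implications separately, using Theorem~\ref{eqflrn} to pass between the Ricci flow on $M=G_{\mu_0}/K_{\mu_0}$ and the bracket flow on $\hca_{q,n}$. For the forward direction I will guess an explicit formula for the linear map $h(t)\colon\pg\to\pg$ appearing in Theorem~\ref{eqflrn}, verify the two characterising conditions (i) $\ddt h=-h\,\Ricci(\ip_t)$ and (iii) $\ip_t=\la h\cdot,h\cdot\ra$ (both with $r\equiv 0$), and invoke uniqueness of solutions to the defining ODE.  The guess is
\[
h(t):=\sqrt{-2ct+1}\;e^{s(t)A}\,e^{-s(t)D_\pg},
\]
motivated by the standard rewriting of the Ricci soliton equation as a rescaled pull-back.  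Since $D\in\Der(\ggo,\mu_0)$ with $D\kg=0$ and $G_{\mu_0}$ is simply connected, $e^{\sigma D}$ integrates to a one-parameter subgroup $\tilde\vp_\sigma\in\Aut(G_{\mu_0})$ fixing $K_{\mu_0}$, and descends to $\vp_\sigma\in\Diff(M)$ with $d\vp_\sigma|_o=e^{\sigma D_\pg}$.  The unnormalized Ricci flow then reads $g(t)=(-2ct+1)\vp_{s(t)}^{*}g_{\mu_0}$ with $s(t)=-\tfrac{1}{2c}\ln(-2ct+1)$ (and $s(t)=t$ when $c=0$), so $\ip_t=(-2ct+1)\la e^{-sD_\pg}\cdot,e^{-sD_\pg}\cdot\ra$.

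Two short computations complete the forward direction.  Condition (iii) holds because $h(0)=I$ and, using only $A^t=-A$, $h^t h=(-2ct+1)e^{-sD_\pg^t}e^{-sD_\pg}$.  For (i), one computes
\[
-h^{-1}\ddt h\;=\;\tfrac{1}{-2ct+1}\bigl[cI+e^{sD_\pg}(D_\pg-A)e^{-sD_\pg}\bigr],
\]
which by the semi-algebraic identity $D_\pg-A=\unm(D_\pg+D_\pg^t)$ collapses to $\tfrac{1}{-2ct+1}\,e^{sD_\pg}\Ricci_{\mu_0}e^{-sD_\pg}$.  Separately, differentiating $\ip_t$ in $t$ and matching with $-2\ricci(\ip_t)$ shows $\Ricci(\ip_t)$ has exactly the same form.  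Uniqueness then forces $\nu(t)=\tilde h(t)\cdot\mu_0$ with $\tilde h(t)=\left[\begin{smallmatrix} I & 0\\ 0 & h(t)\end{smallmatrix}\right]$, and peeling off the scalar $\sqrt{-2ct+1}$ by means of \eqref{scmu} yields \eqref{saevol}.

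For the converse we run the argument in reverse.  If $\nu(t)$ has the form \eqref{saevol}, then by \eqref{scmu} $\nu(t)=\tilde h(t)\cdot\mu_0$ with the same $h(t)$, so Theorem~\ref{eqflrn}(iii) gives $\ip_t=(-2ct+1)\la e^{-s(t)D_\pg}\cdot,e^{-s(t)D_\pg}\cdot\ra$, a Ricci flow solution on $M$.  Evaluating $\ddt\ip_t$ at $t=0$ and comparing with $-2\ricci(\ip_0)$ yields $\Ricci_{\mu_0}=cI+\unm(D_\pg+D_\pg^t)$.  To upgrade this algebraic identity to a semi-algebraic soliton structure in the sense of Definition~\ref{sas} I will produce a derivation $\tilde D\in\Der(\ggo,\mu_0)$: the bracket flow determines $\tilde h(t)$ only up to right multiplication by a curve in the $\mu_0$-stabiliser in $\Gl(\ggo)$, and I will use this freedom to replace $D_\pg$ by a $\tilde D_\pg$ in the same orbit whose block form $\left[\begin{smallmatrix} 0 & * \\ 0 & \tilde D_\pg \end{smallmatrix}\right]$ is a genuine derivation of $\mu_0$.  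Because the symmetric part of $h'(0)$ coincides with $-\Ricci_{\mu_0}$ by (i), while the skew-symmetric component of the logarithmic derivative of $h(t)$ at $0$ is recognised as $A$ directly from the representation, the skew part $A=\unm(\tilde D_\pg-\tilde D_\pg^t)$ is preserved under this modification, whereas the symmetric part may shift, explaining the proviso $\tilde D_\pg\ne D_\pg$.

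The main obstacle is precisely this last step of the converse: while the identity for $\Ricci_{\mu_0}$ is a clean computation, producing an honest derivation of $\ggo$ requires careful use of the stabiliser freedom in the representation \eqref{saevol}, together with the block-structure constraint imposed by $\mu_0(\kg,\kg)\subset\kg$ and $\mu_0(\kg,\pg)\subset\pg$.  Beyond that, the argument consists of careful but largely routine differentiations; the principal care lies in keeping straight the two equivalent ODE formulations (i) and (ii) of Theorem~\ref{eqflrn} and the sign conventions in the pull-back $\vp_\sigma^{*}g_{\mu_0}$ when identifying $d\vp_\sigma|_o$ with $e^{\sigma D_\pg}$.
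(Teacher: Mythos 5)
Your forward direction is essentially the paper's own argument: the paper first checks that $\ip_t=(-2ct+1)\la e^{-s(t)D_\pg}\cdot,e^{-s(t)D_\pg}\cdot\ra$ solves \eqref{RFiprn}, computes $\Ricci(\ip_t)=(-2ct+1)^{-1}e^{s(t)D_\pg}\Ricci(\ip)e^{-s(t)D_\pg}$, solves the ODE of Theorem \ref{eqflrn}(i) to get $h(t)=(-2ct+1)^{1/2}e^{s(t)A}e^{-s(t)D_\pg}$, and applies (iv); your guess-and-verify of the same $h(t)$ is the identical computation in a different order, and is correct. (Incidentally, your reading $s(t)=t$ for $c=0$ is the right one.)

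The converse is where you have a genuine gap. You pass from $\nu(t)=\tilde h(t)\cdot\mu_0$ to ``Theorem \ref{eqflrn}(iii) gives $\ip_t=(-2ct+1)\la e^{-s(t)D_\pg}\cdot,e^{-s(t)D_\pg}\cdot\ra$,'' but (iii) holds for the specific $h(t)$ solving the ODEs (i)--(ii), and a factorization $\nu(t)=\tilde h(t)\cdot\mu_0$ determines $\tilde h(t)$ only up to the stabilizer of $\mu_0$ --- the very freedom you invoke later --- so you cannot identify your $\tilde h_n(t)$ with that $h(t)$, and the displayed formula for $\ip_t$ does not follow. Worse, the conclusion you draw from it, $\Ricci_{\mu_0}=cI+\unm(D_\pg+D_\pg^t)$, is false in general: a non-Einstein algebraic soliton with symmetric $D_\pg$ evolves by $\nu(t)=(-2ct+1)^{-1/2}\cdot\mu_0$ (Proposition \ref{rsequiv}), which has the form \eqref{saevol} with $A=0$ and $D_\pg$ replaced by $0$, yet $\Ricci_{\mu_0}\neq cI$. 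This is exactly why the statement allows $\tilde D_\pg\neq D_\pg$. The paper's converse stays entirely on the bracket side: differentiating \eqref{saevol} at $t=0$ gives $\ddt\nu|_0=-\pi\left(\left[\begin{smallmatrix}0&0\\0&cI+\unm(D_\pg+D_\pg^t)\end{smallmatrix}\right]\right)\mu_0$, while the bracket flow equation gives $\ddt\nu|_0=-\pi\left(\left[\begin{smallmatrix}0&0\\0&\Ricci_{\mu_0}\end{smallmatrix}\right]\right)\mu_0$; hence $E:=\left[\begin{smallmatrix}0&0\\0&\Ricci_{\mu_0}-cI-\unm(D_\pg+D_\pg^t)\end{smallmatrix}\right]$ is a symmetric derivation of $\mu_0$, and $\tilde D:=D+E$ realizes \eqref{alg2} with the same skew part $A$, whence the semi-algebraic soliton property for the simply connected space $G_{\mu_0}/K_{\mu_0}$. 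Your final paragraph gestures at this conclusion, but ``use the stabiliser freedom to replace $D_\pg$ by a $\tilde D_\pg$ in the same orbit whose block form is a genuine derivation'' is not an argument; the concrete mechanism above is what is missing.
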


\begin{proof}
By using equality $\Ricci_{\mu_0}=cI+\unm(D_\pg+D_\pg^t)$, it is easy to check that
\[
\ip_t = (-2ct+1) \la e^{-s(t)D_\pg}  \cdot  ,  e^{-s(t)D_\pg}  \cdot \ra,
\]
is a solution to the unnormalized Ricci flow \eqref{RFiprn}.  The Ricci operator of $\ip_t$ is therefore given by
\[
\Ricci(\ip_t) = (-2ct+1)^{-1} e^{s(t) D_\pg} \Ricci(\ip) e^{-s(t) D_\pg} = (-2ct+1)^{-1} (cI + D_\pg - e^{s(t) D_\pg} A e^{-s(t) D_\pg} )
\]
(recall that $\Ricci(\ip) = \Ricci_{\mu_0} = cI + D_\pg - A$). Now we solve the differential equation given in part (i) of Theorem \ref{eqflrn}, getting $h(t) = (-2ct+1)^{1/2} e^{s(t) A} e^{-s(t) D_\pg}  $, and by using part (iv) of the same theorem we obtain the desired formula for $\nu(t)$.

The converse follows by computing  $\ddt \nu(t)\big|_0$, which equals $-\pi\left(\left[\begin{smallmatrix} 0&0\\ 0&\Ricci_{\mu_0}\end{smallmatrix}\right]\right)\mu_0$.
\end{proof}

\begin{remark}\label{saevol2}
If $D$ has the special form $D =  \left[\begin{smallmatrix} 0&0\\ 0&D_\pg \end{smallmatrix}\right]$ (i.e. $D\pg \subseteq \pg$, or $\ast=0$), which holds for instance if the reductive decomposition satisfies $B_{\mu_0}(\kg,\pg)=0$ for the Killing form $B_{\mu_0}$ of $\mu_0$ (see Corollary \ref{semiB}), then $\left[\begin{smallmatrix} I&0\\ 0&  e^{-s(t)D_\pg} \end{smallmatrix}\right] \in \Aut(\ggo,\mu_0)$, and hence the formula for the unnormalized bracket flow is given by
\begin{equation}\label{saBF2}
\nu(t) = (-2ct+1)^{-1/2} \cdot \left(\left[\begin{smallmatrix} I&0\\ 0& e^{s(t)A} \end{smallmatrix}\right] \cdot \mu_0\right).
\end{equation}
\end{remark}

Algebraic solitons with $D\pg\subset\pg$ are characterized in terms of their bracket flow evolution as follows.

\begin{proposition}\label{rsequiv}
For a homogeneous space $(G_{\mu_0}/K_{\mu_0},g_{\mu_0})$, $\mu_0\in\hca_{q,n}$, the following conditions are equivalent:
\begin{itemize}
\item[(i)] $(G_{\mu_0}/K_{\mu_0},g_{\mu_0})$ is an algebraic soliton with
$$
\Ricci_{\mu_0}=cI+D_\pg, \qquad c\in\RR, \qquad D=\left[\begin{smallmatrix} 0&0\\ 0&D_\pg \end{smallmatrix}\right] \in\Der(\ggo,\mu_0).
$$
\item[(ii)] The unnormalized bracket flow solution is given by
$$
\nu(t)=(-2ct+1)^{-1/2}\cdot\mu_0,
$$
or equivalently,
$$
\nu_{\kg}(t)=(-2ct+1)^{-1}\mu_{\kg}(0), \quad\nu_{\pg}(t)=(-2ct+1)^{-1/2}\mu_{\pg}(0), \quad
t\in\left\{\begin{array}{lcl} (\tfrac{1}{2c},\infty), && c<0, \\ (-\infty,\tfrac{1}{2c}), && c>0, \\ (-\infty,\infty), && c=0. \end{array}\right.
$$
\item[(iii)] The solutions to the unnormalized Ricci flow equations {\rm \eqref{RFrn}} and {\rm \eqref{RFiprn}} are given by
$$
g_{ij}(t)=\la X_i,X_j\ra_t= (-2ct+1)^{r_i/c}\delta_{ij},
$$
where $\{ X_1,\dots,X_n\}$ is an orthonormal basis of $(\pg,\ip)$ of eigenvectors (or Killing vector fields) for $\Ricci_{\mu_0}$ with eigenvalues $\{ r_1,\dots,r_n\}$.
\end{itemize}
\end{proposition}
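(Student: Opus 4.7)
The plan is to establish the equivalence via the chain (i) $\Leftrightarrow$ (ii) $\Leftrightarrow$ (iii), using Proposition \ref{saBF} (and its accompanying Remark \ref{saevol2}) together with Theorem \ref{eqflrn} and a direct differentiation of the candidate explicit formulas at $t=0$.

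For (i) $\Rightarrow$ (ii), I would first note that the Ricci operator is symmetric, so the hypothesis $\Ricci_{\mu_0}=cI+D_\pg$ forces $D_\pg$ to be symmetric, and hence the skew-symmetric piece $A=\unm(D_\pg-D_\pg^t)$ appearing in Proposition \ref{saBF} vanishes. Because $D=\diag(0,D_\pg)$ preserves $\pg$, Remark \ref{saevol2} applies: the factor $\diag(I,e^{-s(t)D_\pg})=e^{-s(t)D}$ lies in $\Aut(\ggo,\mu_0)$ and therefore acts trivially on $\mu_0$, so formula \eqref{saBF2} collapses to $\nu(t)=(-2ct+1)^{-1/2}\cdot\mu_0$; the component-wise formula is then read off from \eqref{scmu}. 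For (ii) $\Rightarrow$ (i), I would differentiate $\nu(t)=(-2ct+1)^{-1/2}\cdot\mu_0$ at $t=0$ using \eqref{scmu} to obtain $\ddt\nu_\kg|_0=2c\mu_{0,\kg}$ and $\ddt\nu_\pg|_0=c\mu_{0,\pg}$, and compare with the unnormalized bracket flow system \eqref{BFrnsis} at $t=0$. Setting $D_\pg:=\Ricci_{\mu_0}-cI$ and using $\pi_n(cI)\mu=-c\mu$, one obtains the identities $\pi_n(D_\pg)\mu_{0,\pg}=0$ and $\mu_{0,\kg}(D_\pg\cdot,\cdot)+\mu_{0,\kg}(\cdot,D_\pg\cdot)=0$. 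Combined with $[D_\pg,\ad_{\mu_0}Z|_\pg]=0$ for every $Z\in\kg$ (a consequence of the $\Ad(K_{\mu_0})$-invariance of the Ricci operator), these three conditions are precisely what is needed to verify the derivation identity $D\mu_0(X,Y)=\mu_0(DX,Y)+\mu_0(X,DY)$ case by case on the three pair types from $\ggo=\kg\oplus\pg$, so $D=\diag(0,D_\pg)\in\Der(\ggo,\mu_0)$ and (i) holds.

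For (i) $\Rightarrow$ (iii), Proposition \ref{saBF} with $A=0$ gives the Ricci flow explicitly as $\ip_t=(-2ct+1)\la e^{-s(t)D_\pg}\cdot,e^{-s(t)D_\pg}\cdot\ra$; choosing $\{X_i\}$ to be an $\ip$-orthonormal eigenbasis of the symmetric $D_\pg$ with $D_\pg X_i=d_i X_i$ (so $r_i=c+d_i$) and substituting $s(t)=-\tfrac{1}{2c}\ln(-2ct+1)$, this collapses to $\ip_t(X_i,X_j)=(-2ct+1)e^{-2s(t)d_i}\delta_{ij}=(-2ct+1)^{1+d_i/c}\delta_{ij}=(-2ct+1)^{r_i/c}\delta_{ij}$. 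For the converse (iii) $\Rightarrow$ (i), I would appeal to Theorem \ref{eqflrn}: from (iii) one computes $\Ricci(\ip_t)=\diag(r_i/(-2ct+1))$, so the diagonal ansatz $h(t)=\diag((-2ct+1)^{r_i/(2c)})$ is the unique solution of the ODE in Theorem \ref{eqflrn}(i), and consequently $\nu(t)=\tilde{h}(t)\cdot\mu_0$. Expanding this in the basis $\{X_i\}$ gives $\nu(t)(X_i,X_j)=\alpha_i^{-1}\alpha_j^{-1}\sum_\gamma a_{ij}^\gamma Z_\gamma+\sum_k \alpha_i^{-1}\alpha_j^{-1}\alpha_k b_{ij}^k X_k$ (with $\alpha_i(t)=(-2ct+1)^{r_i/(2c)}$), and demanding that this coincide with $(-2ct+1)^{-1/2}\cdot\mu_0$ (equivalently, that its Ricci operator match the diagonal form dictated by the bracket-flow/Ricci-flow correspondence) produces polynomial identities in $t$; matching exponents via $\log$-comparison forces $r_i+r_j=2c$ whenever the $Z_\gamma$-coefficient $a_{ij}^\gamma$ is nonzero, and $r_k=r_i+r_j-c$ whenever the $X_k$-coefficient $b_{ij}^k$ is nonzero, which are precisely the conditions that make $D=\diag(0,\Ricci_{\mu_0}-cI)$ into a derivation of $\mu_0$, thereby establishing (i).

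The main obstacle is the direction (iii) $\Rightarrow$ (i): differentiating (iii) at $t=0$ only recovers that $\{X_i\}$ is an eigenbasis of $\Ricci_{\mu_0}$ with eigenvalues $r_i$, which is already built into the statement. The substantive content of (iii) is the \emph{persistence} of the diagonal form for all $t$, and the work lies in converting this time-global datum into explicit algebraic identities on the structure constants of $\mu_0$. The cleanest route, as sketched above, passes through the bracket-flow/Ricci-flow correspondence of Theorem \ref{eqflrn} and rests on the rigidity of polynomial identities in $(-2ct+1)$; alternatively one can compute $\ricci(\ip_t)$ directly from the standard homogeneous Ricci formula for the diagonal metric $\ip_t$ and match it term-by-term against the prescribed diagonal, which produces the same conclusions.
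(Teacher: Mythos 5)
Your treatment of (i) $\Leftrightarrow$ (ii) and of (i) $\Rightarrow$ (iii) is correct and essentially the paper's route: Proposition \ref{saBF} with $A=0$ together with Remark \ref{saevol2} for the first equivalence, and the explicit map $h(t)=e^{a(t)\Ricci_{\mu_0}}$, $a(t)=\tfrac{1}{2c}\log(-2ct+1)$, solving the ODEs of Theorem \ref{eqflrn} for the second. Your direct argument for (ii) $\Rightarrow$ (i) --- differentiating $(-2ct+1)^{-1/2}\cdot\mu_0$ at $t=0$, comparing with \eqref{BFrnsis}, and verifying the derivation identity on the three types of pairs using $[\Ricci_{\mu_0},\ad_{\mu_0}Z|_{\pg}]=0$ for $Z\in\kg$ --- is complete and actually more self-contained than the paper's appeal to the converse half of Proposition \ref{saBF}, which only yields a semi-algebraic structure with symmetric $\tilde D_{\pg}$ and still leaves the block form $D=\left[\begin{smallmatrix}0&0\\0&D_{\pg}\end{smallmatrix}\right]$ to be checked.

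The genuine gap is in (iii) $\Rightarrow$ (i). Having computed $h(t)=\diag\bigl((-2ct+1)^{r_i/(2c)}\bigr)$ and $\nu(t)=\tilde h\cdot\mu_0$, you extract the structure-constant identities by ``demanding that this coincide with $(-2ct+1)^{-1/2}\cdot\mu_0$''; but that coincidence is exactly statement (ii), which is not yet available, so the step is circular. The parenthetical substitute --- that the Ricci operator of $\nu(t)$ match $(-2ct+1)^{-1}\Ricci_{\mu_0}$ --- is not equivalent and carries no information: since $\Ricci_{\nu(t)}=h\,\Ricci(\ip_t)\,h^{-1}$ and both operators are diagonal in the basis $\{X_i\}$, that identity holds automatically for any $\mu_0$ satisfying (iii), soliton or not. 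And even if one expands each entry of $\Ricci_{\nu(t)}$ via the explicit formula for $\Ricci_\mu$ as a sum $\sum_\alpha c_\alpha(-2ct+1)^{e_\alpha}$ quadratic in the structure constants, linear independence of distinct powers only constrains the \emph{sums} of coefficients sharing an exponent; terms of equal exponent can cancel, so one cannot conclude that each individual $a_{ij}^\gamma$ or $b_{ij}^k$ obeys $r_i+r_j=2c$, resp.\ $r_k=r_i+r_j-c$. The paper instead closes the loop (ii) $\Leftrightarrow$ (iii) by observing that \emph{both} conditions produce the same $h(t)=e^{a(t)\Ricci_{\mu_0}}$ through Theorem \ref{eqflrn} (ODE (ii) in one direction, ODE (i) in the other), and then reading off the remaining data from parts (iii) and (iv) of that theorem; you should route your argument back through this identification of $h$ --- and then, for instance, into your own correct (ii) $\Rightarrow$ (i) computation --- rather than through the Ricci operator of $\nu(t)$.
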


\begin{proof}
The equivalence between parts (i) and (ii) follows from Proposition \ref{saBF}, by using Remark \ref{saevol2} and that $A=0$ in this case.  To prove that parts (ii) and (iii) are equivalent, we can use Theorem \ref{eqflrn}, as in both cases one obtains
$$
h(t)=e^{a(t)\Ricci_{\mu_0}}, \qquad a(t)=\tfrac{1}{2c}\log(-2ct+1),
$$
concluding the proof of the proposition.
\end{proof}

Part (iii) of the above proposition generalizes results on the asymptotic behavior of some nilsolitons obtained in \cite{Pyn,Wllm} and algebraic solitons on Lie groups in \cite{nicebasis}.  Recall that $c<0$ for any nontrivial homogeneous Ricci soliton.

It follows from Proposition \ref{rsequiv}, (ii) that if $\mu_{\kg}=0$ (e.g. if $q=0$) or $\mu_{\pg}=0$, then the trace of the algebraic soliton $\nu(t)$ is contained in the straight line segment joining $\mu_0$ with the flat metric $\lambda$ given by $\lambda|_{\kg\times\ggo}=\mu_0|_{\kg\times\ggo}$, $\lambda|_{\pg\times\pg}=0$ (see e.g. the algebraic solitons $S^2\times\RR$, $H^2\times\RR$ and $Nil$ in \cite[Figure 1]{homRF}, and those denoted by $G_{bi}$, $E2$, $H\times\RR^m$ and $N$ in \cite[Figure 5]{homRF}).  Otherwise, if $\mu_{\kg},\mu_{\pg}\ne 0$, then $\nu(t)$ stays in the half parabola $\{ s^2\mu_{\kg}+s\mu_{\pg}:s>0\}$ joining $\mu_0$ with the flat $\lambda$  (e.g. the round metrics on $S^3$ in \cite[Figure 1]{homRF}).  In any case, the forward direction is determined by the sign of $c$.

\vs

We now study the evolution of semi-algebraic solitons under a normalized bracket flow.  Let $F:\hca_{q,n}\longrightarrow\RR$ be a function which is invariant under isometry (i.e. $F(\mu)=F(\lambda)$ for any pair $\mu,\lambda\in\hca_{q,n}$ of isometric homogenous spaces).  In particular, $F$ is $\left[\begin{smallmatrix} \Gl_q(\RR)&0\\ 0&\Or(n) \end{smallmatrix}\right]$-invariant relative to the action defined in (\ref{action})-(\ref{adkh}).  Also assume that $F$ is {\it scaling invariant}, in the sense that there exists $d\ne 0$ such that $F(c\cdot\mu)=c^dF(\mu)$  for any $c\in\RR$, $\mu\in\hca_{q,n}$.  Some examples of isometry and scaling invariant functions are given by the scalar curvature $R(\mu)$, more in general the other traced powers $\tr{\Ricci_\mu^k}$ of the Ricci operator, and $\|\nabla^k\Riem_\mu\|$, where $\nabla_\mu$ denotes the Levi-Civita connection and $\Riem_\mu$ the Riemann curvature tensor.

Consider the normalized bracket flow $\mu(t)$ as in (\ref{BFrn}) such that $F(\mu(t))\equiv F(\mu_0)$, and express it in terms of the unnormalized bracket flow by $\mu(t)=c(t)\nu(\tau(t))$ (see (\ref{ctau})).  If $\mu_0\in\hca_{q,n}$ is a Ricci soliton with cosmological constant $c_0$ (see \eqref{rseq}), then it follows from Theorem \ref{eqflrn} and (\ref{rssol}) that
$$
F(\mu_0)=c(t)^d(-2c_0\tau(t)+1)^{-d/2}F(\mu_0), \qquad\forall t.
$$
By assuming that $F(\mu_0)\ne 0$, we obtain that $c(t)=(-2c_0\tau(t)+1)^{1/2}$ and so $c'(t)=-c_0c(t)$, from which follows that $r(t)\equiv -c_0$ (recall from (\ref{ctau}) that $c'=rc$).  The evolution equation of the normalized bracket flow yielding $F$ constant and starting at a Ricci soliton $\mu_0$ is therefore given by
\begin{equation}\label{F-eq}
\ddt\mu=-\pi\left(\left[\begin{smallmatrix} 0&0\\ 0&\Ricci_{\mu}
\end{smallmatrix}\right]\right)\mu -c_0\mu, \qquad \mu(0)=\mu_0.
\end{equation}
Moreover, it follows that $c(t)=e^{-c_0t}$ and $\tau(t)=\tfrac{1-e^{-2c_0t}}{2c_0}$ (and $\tau(t)=t$ if $c_0=0$), which together with Proposition \ref{saBF} yield the following result.

\begin{proposition}\label{F-const}
Let $(G_{\mu_0}/K_{\mu_0},g_{\mu_0})$, $\mu_0 \in \hca_{q,n}$, be a homogeneous space that is a semi-algebraic soliton, say with
$$
\Ricci_{\mu_0}=cI+\unm(D_\pg+D_\pg^t), \qquad c\in\RR, \qquad D:=\left[\begin{smallmatrix} 0&\ast\\ 0&D_\pg \end{smallmatrix}\right]\in\Der(\ggo,\mu_0).
$$
Let $F:\hca_{q,n}\longrightarrow\RR$ be any differentiable function invariant under isometry and scaling such that $F(\mu_0)\ne 0$.  Then the normalized bracket flow solution such that $F(\mu(t))\equiv F(\mu_0)$ is given by
\begin{equation}\label{saevol_sc}
\mu(t) =  \left[\begin{smallmatrix} I&0\\ 0& e^{tA} e^{-tD_\pg} \end{smallmatrix}\right] \cdot \mu_0, \qquad t\in (-\infty,\infty),
\end{equation}
where $ A = \unm(D_\pg - D_\pg^t)$.
\end{proposition}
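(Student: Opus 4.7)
The plan is to combine Proposition \ref{saBF}, which describes the unnormalized bracket flow starting at a semi-algebraic soliton, with the explicit rescaling and time reparametrization (\ref{ctau}) dictated by the constraint $F(\mu(t))\equiv F(\mu_0)$.  Once the normalization function $r(t)$ is pinned down, both $c(t)$ and $\tau(t)$ are known explicitly, and substituting $\tau(t)$ into (\ref{saevol}) will drop out the desired formula (\ref{saevol_sc}) after cancellation of the scaling factors.

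First I would identify the cosmological constant.  Since $(G_{\mu_0}/K_{\mu_0},g_{\mu_0})$ is semi-algebraic with $\Ricci_{\mu_0}=cI+\tfrac{1}{2}(D_\pg+D_\pg^t)$, the one-parameter flow $e^{-tD_\pg}$ yields, via the Lie-derivative computation in (\ref{Lder}) (which in the nonsymmetric case produces $-(D_\pg+D_\pg^t)$ instead of $-2D_\pg$), an identification of $g_{\mu_0}$ as a Ricci soliton of cosmological constant $c_0=c$.  The argument immediately preceding the proposition, applied to any isometry- and scaling-invariant $F$ with $F(\mu_0)\ne 0$, then forces $r(t)\equiv -c$; solving $c'=rc$, $c(0)=1$ and $\tau'=c^2$, $\tau(0)=0$ gives $c(t)=e^{-ct}$ and $\tau(t)=\tfrac{1-e^{-2ct}}{2c}$ (with $\tau(t)=t$ when $c=0$), and in all three cases $\tau:\RR\to(T_-^0,T_+^0)$ is a bijection onto the maximal domain for $\nu$.

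Finally I would substitute $\tau(t)$ into (\ref{saevol}).  The key identities are $-2c\tau(t)+1=e^{-2ct}$, which gives the scaling factor $(-2c\tau(t)+1)^{-1/2}=e^{ct}$, and $s(\tau(t))=-\tfrac{1}{2c}\ln(e^{-2ct})=t$; hence
\[
\nu(\tau(t))=e^{ct}\cdot\left(\left[\begin{smallmatrix} I&0\\ 0&e^{tA}e^{-tD_\pg}\end{smallmatrix}\right]\cdot\mu_0\right).
\]
Combining with $\mu(t)=c(t)\cdot\nu(\tau(t))=e^{-ct}\cdot\nu(\tau(t))$ and using $c_1\cdot(c_2\cdot\mu)=(c_1c_2)\cdot\mu$ together with the fact that the rescaling (\ref{scmu}) commutes with the $\Gl_{q+n}(\RR)$-action (\ref{action}) by any block matrix of the form $\left[\begin{smallmatrix} I&0\\ 0&L\end{smallmatrix}\right]$ (both being realized by diagonal-like, hence commuting, elements of $\Gl_{q+n}(\RR)$), the factors $e^{\pm ct}$ cancel and (\ref{saevol_sc}) follows, with time interval $(-\infty,\infty)$ immediate.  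I foresee no genuine obstacle; the only point requiring care is this commutation between scaling and block-matrix action, and it amounts to a short verification.  Everything else is bookkeeping on top of Proposition \ref{saBF}.
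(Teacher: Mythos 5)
Your proposal is correct and is essentially the paper's own argument: the authors derive $r(t)\equiv -c_0$, $c(t)=e^{-c_0t}$ and $\tau(t)=\tfrac{1-e^{-2c_0t}}{2c_0}$ in the paragraph preceding the proposition and then simply state that these "together with Proposition \ref{saBF} yield the result," which is exactly the substitution $-2c\tau(t)+1=e^{-2ct}$, $s(\tau(t))=t$ and cancellation of scalings that you carry out. The commutation point you flag is harmless but not even needed, since the scaling factor in \eqref{saevol} already sits outermost and only the group-action identity $c_1\cdot(c_2\cdot\mu)=(c_1c_2)\cdot\mu$ is used.
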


It follows from Remark \ref{saevol2} that if the derivation $D$ satisfies $D\pg \subseteq \pg$, and $F:\hca_{q,n}\longrightarrow\RR$ is only assumed to be scaling and $\left[\begin{smallmatrix} I&0\\ 0&\Or(n) \end{smallmatrix}\right]$-invariant, then the evolution is simply given by
\begin{equation}\label{F-const2}
\mu(t) =  \left[\begin{smallmatrix} I&0\\ 0& e^{tA}  \end{smallmatrix}\right] \cdot \mu_0.
\end{equation}
An example of a scaling and $\left[\begin{smallmatrix} I&0\\ 0&\Or(n) \end{smallmatrix}\right]$-invariant function which is not isometry invariant is $F(\mu)=\|\mu\|$ (see Section \ref{N-norm} below).

As the orbit $\left[\begin{smallmatrix} I&0\\ 0&\Or(n) \end{smallmatrix}\right]\cdot\mu_0$ is compact, the solution $\mu(t)$ in \eqref{F-const2} stays bounded and the limits of subsequences $\mu(t_k)$ are all isomorphic to $\mu_0$ (compare with Example \ref{evol-nosemi}).

On the other hand, recall that $A$ is skew-symmetric, so its eigenvalues are purely imaginary.  By Kronecker's theorem, there exists a sequence $t_k$, with $t_k \rightarrow \infty$, such that $e^{t_kA} \rightarrow I$.  This implies that $\mu(t_0+t_k) \underset{k \rightarrow \infty}\longrightarrow \mu(t_0)$ for any $t_0\in\RR$ and thus the whole solution is contained in its $\omega$-limit.  The absence of this kind of chaos for the bracket flow, which is still an open question, would imply that $\mu(t)\equiv\mu_0$, and so $A$ should be a derivation of $\mu_0$ thus obtaining that any semi-algebraic soliton would be algebraic.

\subsection{Normalizing by the bracket norm}\label{N-norm}
The choice of any inner product on $\kg$ allows us to consider the inner product on $\lamg$ defined by
\begin{equation}\label{innV}
\la\mu,\lambda\ra= \sum\la\mu(Y_i,Y_j),\lambda(Y_i,Y_j)\ra,
\end{equation}
where $\{ Y_i\}$ is the union of orthonormal bases of $\kg$ and $(\pg,\ip)$, respectively.  Given any $\mu\in\hca_{q,n}$, one may assume to make simpler some computations that the inner product on $\kg$ is $\Ad(K_\mu)$-invariant (recall that $\overline{\Ad(K_{\mu})}$ is compact in $\Gl(\ggo)$), which will also hold for any other element in $\hca_{q,n}$ coinciding with $\mu$ on $\kg\times\kg$ (e.g. for any normalized bracket flow solution starting at $\mu$).

The normalized bracket flow equation as in (\ref{BFrn}) such that the norm $\|\mu(t)\|$ of any solution remains constant in time, produces an interesting consequence: there always exists a convergent subsequence $\mu(t_k)\to\lambda$.  Recall that $\mu(t)|_{\kg\times\ggo}\equiv\mu_0|_{\kg\times\ggo}$ for any bracket flow solution, so we only need to keep $\|\mu(t)|_{\pg\times\pg}\|^2=\|\mu_{\kg}\|^2+\|\mu_{\pg}\|^2$ constant.  By using that
$$
\|c\cdot\mu|_{\pg\times\pg}\|^2=c^4\|\mu_{\kg}\|^2+c^2\|\mu_{\pg}\|^2,
$$
we obtain that $\|c\cdot\mu|_{\pg\times\pg}\|^2=1$ if and only if
\begin{equation}\label{cN-norm1}
c^2(t):= \frac{-\|\mu_{\pg}\|^2+\sqrt{\|\mu_{\pg}\|^4+4\|\mu_{\kg}\|^2}}{2\|\mu_{\kg}\|^2}, \qquad \mu_{\kg}\ne 0,
\end{equation}
and $c(t)=\|\mu_{\pg}\|^{-1}$ for $\mu_{\kg}=0$.  It is easy to prove that the corresponding normalizing function $r$ in equation (\ref{BFrn}) must be
$$
r=\frac{4\tr{\Ricci\mm}-\la\mu_{\kg},\mu_{\kg}(\Ricci\cdot,\cdot)+\mu_{\kg}(\cdot,\Ricci\cdot)\ra}{2\|\mu_{\kg}\|^2+\|\mu_{\pg}\|^2}.
$$
An alternative way to guaranty the existence of a convergent subsequence is by taking
\begin{equation}\label{cN-norm2}
c(t):=\frac{1}{\|\mu_{\kg}\|^{1/2}+\|\mu_{\pg}\|},
\end{equation}
as it follows that $0<\beta\leq\|c\cdot\mu|_{\pg\times\pg}\|^2\leq 2$, where $\beta=(1-\alpha)^2+\alpha^4\approx 0.289$ and $\alpha$ is the real root of $2x^3+x-1$.

Once we obtain a convergent subsequence $\mu(t_k)\to\lambda$, we have that $\lambda|_{\pg\times\pg}\ne 0$ as soon as $\mu_0|_{\pg\times\pg} \ne 0$, but we do not know if it is always the case that $\lambda\in\hca_{q,n}$.  The only condition in Definition \ref{hqn} which may fail is (h2), that is, $K_\lambda$ might not be closed in $G_\lambda$.  This would yield a collapsing with bounded geometry under the Ricci flow to the lower dimensional homogeneous space $G_\lambda/\overline{K_\lambda}$ (see \cite[Section 6.5]{spacehm}).  We also note that $\lambda$ may belong to $\hca_{q,n}$ and nevertheless be flat (see the examples in Sections \ref{evol-nosemi} and \ref{evol-Dpnop}).

Recall that $F(\mu)=\|\mu\|$ is scaling and $\left[\begin{smallmatrix} I&0\\ 0&\Or(n) \end{smallmatrix}\right]$-invariant, so that formula (\ref{F-const2}) applies for the evolution of semi-algebraic solitons.

\subsection{Scalar curvature normalization}\label{R-norm}
If we start the Ricci flow at a Ricci soliton $(M,g)$, the solution $g(t)$ is homothetic (i.e. isometric up to scaling) to $g$ for each $t$ where it is defined. So, in the homogeneous case, normalizing by constant in time scalar curvature yields a solution $g(t)$ that is isometric to $g$ for each $t$.

It is well known that the scalar curvature $R=R(g(t))$ of any Ricci flow solution $g(t)$ evolves by
$$
\dpar R=\Delta(R) +2\|\ricci\|^2,
$$
where $\Delta$ is the Laplace operator of the Riemannian manifold $(M,g(t))$ (see e.g. \cite[Lemma 6.7]{ChwKnp}).  As in the homogeneous case $R$ is constant on $M$, we simply get
\begin{equation}\label{evR}
\ddt R=2\|\ricci\|^2.
\end{equation}
We refer to \cite[Proposition 3.8, (vi)]{homRF} for an alternative proof of this fact within the homogeneous setting, as an application of Theorem \ref{eqflrn}.

\begin{lemma}\label{trRic2}
Let $(M,g)$ be a Ricci soliton with constant scalar curvature (not necessarily homogeneous), say with $\ricci(g) = c g + \lca_Xg$ and Ricci operator $\Ricci(g)$ . Then,
$$
cR(g)=\tr{\Ricci(g)^2},
$$
where $R(g)=\tr{\Ricci(g)}$ is the scalar curvature.
\end{lemma}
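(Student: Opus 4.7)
The plan is to exploit the fact that a Ricci soliton evolves self-similarly under the Ricci flow, combined with the well-known evolution equation $\partial_t R = \Delta R + 2\|\ricci\|^2$ for the scalar curvature.  The main idea: differentiate the identity $R(g(t)) = (-2ct+1)^{-1}R(g)$ in time at $t=0$, compare the result with the Ricci-flow evolution of $R$, and use the constant-scalar-curvature hypothesis to kill the Laplacian term. The identification $\|\ricci(g)\|^2 = \tr \Ricci(g)^2$ (valid for any symmetric 2-tensor) then closes the computation.

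More concretely, by \eqref{rssol} the family $g(t)=(-2ct+1)\vp_t^*g$ solves the unnormalized Ricci flow on $(M,g)$. Since scalar curvature is a diffeomorphism invariant and scales as $R(\lambda g)=\lambda^{-1}R(g)$, we have
\[
R(g(t)) \;=\; (-2ct+1)^{-1}\,R(g),
\]
valid at every point of $M$ and for every $t$ where the flow is defined. In particular $R(g(t))$ remains spatially constant for all $t$, so $\Delta_{g(t)} R(g(t))\equiv 0$. Differentiating the displayed identity at $t=0$ gives $\ddt|_0 R(g(t)) = 2cR(g)$, while the standard Ricci flow identity $\partial_t R = \Delta R + 2\|\ricci\|^2$, evaluated at $t=0$ and at any point, yields $\ddt|_0 R(g(t)) = 2\|\ricci(g)\|^2$. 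Equating the two expressions and using $\|\ricci(g)\|^2 = \tr \Ricci(g)^2$ produces $cR(g)=\tr\Ricci(g)^2$, as claimed.

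There is no real obstacle here; the only point deserving care is checking that the constant-scalar-curvature hypothesis really propagates along the soliton flow (so that $\Delta R(g(t))=0$ not just at $t=0$ but in the computation of $\partial_t R$), which follows immediately from the scaling-and-pullback form of $g(t)$. One should also note that the result is independent of completeness or homogeneity assumptions on $(M,g)$: the argument is purely pointwise once the self-similar form of the flow and the evolution of $R$ are known. If one wishes to avoid invoking the general evolution of $R$, an alternative in the homogeneous setting is to apply \eqref{evR} directly, but the PDE version above is the cleanest route in the stated generality.
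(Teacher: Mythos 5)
Your argument is correct and coincides with the paper's own proof: both differentiate the self-similar identity $R(g(t))=(-2ct+1)^{-1}R(g)$ and compare with the evolution equation $\partial_t R=\Delta R+2\|\ricci\|^2$, using that the constant-scalar-curvature hypothesis propagates along the soliton flow so the Laplacian term vanishes. No issues.
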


\begin{proof}
Let $g(t)$ be the unnormalized Ricci flow starting at $g$. It follows from \eqref{rssol} and \eqref{evR} that
\begin{align*}
2(-2ct+1)^{-2}\tr{\Ricci(g)^2} &= 2 \tr{\Ricci(g(t))^2} = \ddt R(g(t)) \\ &= \ddt (-2ct+1)^{-1} R(g) = 2c(-2ct+1)^{-2} R(g),
\end{align*}
and so the lemma follows.
\end{proof}

By using that a homogeneous manifold is flat if and only if it is Ricci flat (see \cite{AlkKml}), we deduce from the above lemma that a homogeneous Ricci soliton $(M,g)$ is flat as soon as $c=0$ or $R(g)=0$.  Furthermore, if $(M,g)$ is nonflat, then $c$ and $R(g)$ are both nonzero and have the same sign.

For the normalized bracket flow in this case, we have that the limit $\lambda$ of any subsequence $\mu(t_k)\to\lambda$, as $t_k\to\pm\infty$, is automatically nonflat as $R(\lambda)=R(\mu_0)$.  Unfortunately, the solution may diverge to infinity without any convergent subsequence.  There are examples of this behavior with $R<0$ in \cite[Sections 3.4 and 4]{homRF}, and to obtain examples with $R>0$ consider any product $E\times S$ of a compact Einstein homogeneous manifolds $E$ and a nonabelian flat solvable Lie group $S$.

Since $F(\mu)=R(\mu)$ is scaling and isometry invariant, we can apply Proposition \ref{F-const} and (\ref{F-const2}) to study the evolution of semi-algebraic solitons under this normalization.  In this case, these results also follow more directly by using \cite[Example 3.13]{homRF} and Lemma \ref{trRic2}.

\subsection{Example of a non semi-algebraic soliton evolution}\label{evol-nosemi}
Our aim in this section is to study the bracket flow evolution of the homogeneous Ricci soliton which is not semi-algebraic given in Example \ref{nosemi2}.  We therefore fix an orthonormal basis $\{X_1,Y_1,Z_1,X_2,Y_2,Z_2\}$ of $\ggo$ and consider $\nu=\nu_{a,b,c}\in\hca_{0,6}=\lca_6$ defined by
$$
\nu(X_1,Y_1)=aZ_1, \quad \nu(X_1,X_2)=bY_2, \quad \nu(X_1,Y_2)=-bX_2, \quad \nu(X_2,Y_2)=cZ_2.
$$
It is easy to see that for any $a,c\ne 0$, the solvmanifold $(G_\nu,\ip)$ is isometric to the nilsoliton $H_3\times H_3$, where $H_3$ denotes the $3$-dimensional Heisenberg group.  By a straightforward computation we obtain that the unnormalized bracket flow is equivalent to the ODE
$$
a'=-\tfrac{3}{2}a^3, \qquad b'=-\unm a^2b, \qquad c'=-\tfrac{3}{2}c^3,
$$
from which follows that if $a(0)=b(0)=c(0)=1$, then
$$
a=b^3, \qquad b(t)=(3t+1)^{-\tfrac{1}{6}}, \qquad c=b^3, \qquad t\in (-\tfrac{1}{3},\infty).
$$
Thus $\nu(t)\longrightarrow 0$ (flat), as $t\to\infty$, and $\nu(t)\longrightarrow \infty$, as $t\to-\tfrac{1}{3}$.

It follows from $\|\nu\|^2=2(a^2+2b^2+c^2)=4b^2(b^4+1)$ that
\begin{eqnarray*}
\frac{a}{\|\nu\|}&=&\frac{b^2}{2(b^4+1)^{1/2}}\underset{t\raw\infty}\longrightarrow 0, \qquad \bigg(\underset{{t\to-\tfrac{1}{3}}}\longrightarrow \unm\bigg), \\
\frac{b}{\|\nu\|}&=&\frac{1}{2(b^4+1)^{1/2}}\underset{t\raw\infty}\longrightarrow \unm, \qquad \bigg(\underset{{t\to-\tfrac{1}{3}}}\longrightarrow 0\bigg).
\end{eqnarray*}
This implies that under the bracket norm normalization, $\frac{\nu}{\|\nu\|}\longrightarrow\nu_{0,\unm,0}$, as $t\to\infty$, a nonabelian flat solvmanifold, and backward, as $t\to -\tfrac{1}{3}$,  we have that $\frac{\nu}{\|\nu\|}\longrightarrow\nu_{\unm,0,\unm}$, the nilsoliton $H_3\times H_3$ itself.

Concerning scalar curvature normalization, by using that $R=R(\nu)=-\unm(a^2+b^2)=-b^6$, we obtain
\[
\frac{a}{|R|^{1/2}}\equiv 1, \qquad
\frac{b}{|R|^{1/2}}=\frac{1}{b^2}\underset{t\to\infty}\longrightarrow \infty, \qquad \bigg(\underset{{t\to-\tfrac{1}{3}}}\longrightarrow 0 \bigg),
\]
and therefore $\frac{\nu}{|R|^{1/2}}\longrightarrow\infty$, as $t\to\infty$.  In the backward direction, as $t\to -\tfrac{1}{3}$, one has $\frac{\nu}{|R|^{1/2}}\longrightarrow\nu_{1,0,1}$, the nilsoliton  $H_3\times H_3$.

We note that all the limits obtained are non-isomorphic to the starting point $\nu_0=\nu_{1,1,1}$, and that $\nu(t)$ even diverges to infinity in one case.  This is in clear contrast with the evolution of semi-algebraic solitons described in (\ref{F-const2}).

\subsection{Example of an algebraic soliton evolution with $D\pg \nsubseteq \pg$}\label{evol-Dpnop}
By Proposition \ref{limrs}, (i), the fixed points of the normalized bracket flow are precisely algebraic solitons with $D\pg \subseteq \pg$. It is then natural to ask how an algebraic soliton with $D\pg \nsubseteq \pg$ evolves, and this is the question we address in this section, by studying the bracket flow evolution of the algebraic soliton given in Example \ref{Dpnop} in the specific case where $\ngo=\hg_3$ is the $3$-dimensional Heisenberg Lie algebra. Fix a basis $\{Z,X_1,X_2,X_3 \}$ of $\ggo$ and consider $\nu = \nu_{a,b,c}\in \hca_{1,3}$ defined by
\[
\left\{
\begin{array}{lll}
    \nu(Z, X_1) = X_2, & \nu(X_1,X_2) = a X_3 + b Z, & \nu(X_3,X_1) = cX_2,\\
    \nu(Z, X_2) = -X_1 &                             & \nu(X_2,X_3) = cX_1,
\end{array}
\right.
\]
where $\kg = \RR Z$ and $\{X_1,X_2,X_3\}$ is an orthonormal basis of $(\pg,\ip)$. For $(a,b,c)=(1,-1,1)$ we get the nilsoliton $H_3$ presented with the modified reductive decomposition, as in Example \ref{Dpnop}. The unnormalized bracket flow for $\nu_{a,b,c}$ is equivalent to the ODE
\[
a' = (-\tfrac32 a^2 + 2b + 2ac) a, \qquad b' = (-a^2 + 2b + 2ac)b, \qquad c' = \unm a^2 c.
\]
It is not difficult to see that if $b\neq 0$ then $\tfrac{ac}{b}$ remains constant, and so starting at $a(0) = c(0) = 1, b(0)=-1$ one easily solves the ODE and gets
\[
a = c^{-3}, \quad b = -c^{-2}, \qquad c(t) = (3t+1)^{\tfrac16}, \qquad t\in(-\tfrac13, \infty).
\]
We obtain $\nu(t) \lraw \infty$ if we let either $t\to \infty$ or $t\to -\tfrac13$.  This provides an explicit example of the following unexpected behavior: a bracket flow solution which is immortal but not due to uniform boundedness, as it goes to infinity.

Under the bracket norm normalization defined in \eqref{cN-norm2}, we see that
\[
\tfrac1{ \|\nu_{\kg}\|^{1/2}+\|\nu_{\pg}\| } \cdot \nu \underset{t\raw\infty}\lraw \nu_{0,0,\unm},
\]
a flat metric on the solvable Lie group $E(2)$, and backward,
\[
\tfrac1{ \|\nu_{\kg}\|^{1/2}+\|\nu_{\pg}\| } \cdot \nu \underset{t\raw -\tfrac13}\lraw \nu_{\tfrac1{\sqrt{2}},0,0},
\]
the nilsoliton $H_3$ itself, though presented with reductive decomposition such that $D\pg \subseteq \pg$. This follows from a straightforward calculation, by using that $\| \nu_\pg \|^2 = 2a^2 + 4c^2$, $\| \nu_\kg \| ^2 = 2 b^2$.

Regarding scalar curvature normalization, we have that $R = R(\nu) = -\unm a^2 = -\unm c^{-6}$, and then
\[
\frac{a}{|R|^{1/2}} \equiv \sqrt2, \qquad \frac{b}{|R|} = -2c^4, \qquad \frac{c}{|R|^{1/2}} = \sqrt{2} c^4.
\]
This implies that $\frac1{|R|^{1/2}} \cdot \nu \lraw \infty$ as $t\raw \infty$, and backward, one has that as $t\raw -\frac13$, $\frac1{|R|^{1/2}} \cdot \nu \lraw \nu_{\sqrt{2},0,0}$, the same nilsoliton $H_3$ obtained in the backward limit of the bracket norm normalization.

As in the previous example, we obtain non-isomorphic limits and even divergence in one case, in contrast with Proposition \ref{rsequiv} and \eqref{F-const2}, thus showing the advantages of having condition $D\pg \subseteq \pg$.

\section{A geometric characterization of algebraic solitons}\label{algdiag}

Whereas the concept of Ricci soliton is a Riemannian invariant, that is, invariant under isometry, the concept of semi-algebraic soliton is not, as it may depend on the presentation of the homogeneous manifold $(M,g)$ as a homogeneous space $(G/K,g)$ (see Section \ref{hrs}).  Moreover, being an algebraic soliton may a priori not only depend on such presentation, but also on the reductive decomposition $\ggo = \kg \oplus \pg$ one is choosing for the homogeneous space (see Definition \ref{as}).

The following property plays a key role in the study of the Ricci flow for homogeneous manifolds (see \cite{nicebasis}).

\begin{definition}\label{RFdiag}
A homogeneous manifold $(M,g)$ is said to be {\it Ricci flow diagonal} if at some point $p\in M$ there exists an orthonormal basis $\beta$ of $T_pM$ such that the Ricci flow solution $g(t)$ starting at $g$ is diagonal with respect to $\beta$ for any $t\in (T_-,T_+)$ (i.e. $g_{ij}(t)(p)=0$ for all $i\ne j$).
\end{definition}

We note that the point $p$ plays no role in this definition, as the condition holds either for every point or for none.  It is easy to see that the property of being Ricci flow diagonal is invariant under isometry.  Already in dimension $4$, there is a left-invariant metric on a nilpotent Lie group which is not Ricci flow diagonal (see \cite[Example 5.7]{nicebasis}).

Let $(M,g)$ be a homogeneous Ricci soliton, and consider any presentation $(G/K,g_{\ip})$ of $(M,g)$ with reductive decomposition $\ggo=\kg\oplus\pg$.  It is easy to check that if the unnormalized Ricci flow solution to equation \eqref{RFiprn} is written as
$$
\ip_t=\la P(t)\cdot,\cdot\ra,
$$
where $P(t)$ is the corresponding smooth curve of positive definite operators of $(\pg,\ip)$, then the Ricci flow equation is equivalent to the following
ODE for $P$:
\begin{equation}\label{RFP}
\ddt P=-2P\Ricci(\ip_t),
\end{equation}
where $\Ricci(\ip_t):=\Ricci(g(t))(o):\pg\longrightarrow\pg$ is the Ricci operator at the origin.  It follows from the uniqueness of ODE solutions that the following conditions are equivalent:

\begin{itemize}
\item $(M,g)$ is Ricci flow diagonal.

\item There exists an orthonormal basis $\beta$ of $(\pg,\ip)$ such that the matrix $[\Ricci(\ip_t)]_\beta$ is diagonal for all $t\in (T_-,T_+)$.

\item The family of symmetric operators $\{ P(t):t\in (T_-,T_+)\}$ is commutative.
\end{itemize}

By Remark \ref{saevol2} and Proposition \ref{rsequiv}, (iii), any algebraic soliton is Ricci flow diagonal.  We now prove that this condition actually characterizes algebraic solitons among homogeneous Ricci solitons.  In particular, if any, a homogeneous Ricci soliton which is not isometric to any algebraic soliton must be geometrically different from all known examples.

\begin{theorem}\label{diagalg}
A homogeneous Ricci soliton is Ricci flow diagonal if and only if it is isometric to an algebraic soliton.
\end{theorem}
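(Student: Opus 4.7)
The ``if'' direction is already noted in the discussion just before the statement (via Remark \ref{saevol2} and Proposition \ref{rsequiv}(iii)), so I would concentrate on the converse. My plan is: present the soliton with respect to its full isometry group; extract from the semi-algebraic structure a derivation $D$ of the Lie bracket; decompose its $\pg$-part into symmetric and skew pieces $S+A$; use Ricci flow diagonality to force $[S,A]=0$; and finally upgrade $\left[\begin{smallmatrix} 0&0\\ 0&S\end{smallmatrix}\right]$ to a derivation via a real Jordan--Chevalley argument, yielding the algebraic soliton condition of Definition \ref{as}.

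Concretely, let $(M,g)$ be a homogeneous Ricci soliton that is Ricci flow diagonal, present $(M,g)=(G/K,g_{\ip})$ with $G=\Iso(M,g)$, and fix a reductive decomposition $\ggo=\kg\oplus\pg$ with $B(\kg,\pg)=0$ (Lemma \ref{Bkp0}). By Theorem \ref{semi}, Corollary \ref{semiB}, and Lemma \ref{Dkp}, there exists $D=\left[\begin{smallmatrix} 0&0\\ 0&D_\pg\end{smallmatrix}\right]\in\Der(\ggo,\mu_0)$ with $\Ricci(g_{\ip})=cI+\unm(D_\pg+D_\pg^t)$. Write $D_\pg=S+A$ with $S=\unm(D_\pg+D_\pg^t)$ symmetric and $A=\unm(D_\pg-D_\pg^t)$ skew-symmetric with respect to $\ip$, so that $\Ricci(g_{\ip})=cI+S$.

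To derive $[S,A]=0$ I would read off the unnormalized Ricci flow solution from the proof of Proposition \ref{saBF}: one has $\ip_t=\la P(t)\cdot,\cdot\ra$ with $P(t)=(-2ct+1)\,f(s(t))$, where $f(s):=e^{-sD_\pg^t}e^{-sD_\pg}$ and $s(t)=-\tfrac{1}{2c}\ln(-2ct+1)$ (or $s(t)=t$ if $c=0$), and where $s(t)$ ranges over all of $\RR$. The Ricci flow diagonal hypothesis provides an orthonormal basis $\beta$ of $(\pg,\ip)$ in which $P(t)$, and hence $f(s)$, is diagonal for every $s\in\RR$. A direct computation gives $f'(0)=-2S$ and $f''(0)=4S^2+2[S,A]$, so $S$ must be diagonal in $\beta$ (say with eigenvalues $s_1,\dots,s_n$) and $[S,A]$ must be diagonal as well. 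Since $[S,A]_{ij}=(s_i-s_j)A_{ij}$, the off-diagonal entries vanish precisely when $A$ preserves each eigenspace of $S$, equivalently when $[S,A]=0$.

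Once $[S,A]=0$, the commuting pair ($S$ symmetric, $A$ skew) is simultaneously diagonalizable over $\CC$, so $D_\pg$ is semisimple with eigenvalues $s_j+\im a_j$, and $S$, $A$ are exactly its real-semisimple and compact (purely imaginary) semisimple parts. Thus $D=\left[\begin{smallmatrix} 0&0\\ 0&D_\pg\end{smallmatrix}\right]\in\Der(\ggo,\mu_0)$ is semisimple, and the refined real form of the Jordan--Chevalley theorem for Lie-algebra derivations shows that its real and compact parts are themselves derivations; in particular $\left[\begin{smallmatrix} 0&0\\ 0&S\end{smallmatrix}\right]\in\Der(\ggo,\mu_0)$. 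Combined with $\Ricci(g_{\ip})=cI+S$ this is precisely Definition \ref{as}, so $(G/K,g_{\ip})$ is an algebraic soliton. The main subtlety in the argument is this real form of Jordan--Chevalley: one cannot just take polynomials in $D$. Instead one works on the complexified generalized eigenspace decomposition $\ggo_\CC=\bigoplus_\lambda\ggo_\lambda$ and uses the derivation identity $[\ggo_\lambda,\ggo_\mu]\subseteq\ggo_{\lambda+\mu}$ to check that the operator acting on $\ggo_\lambda$ by multiplication by $\Re(\lambda)$ (respectively $\im\Im(\lambda)$) satisfies the Leibniz rule and is real because the eigenvalues come in conjugate pairs.
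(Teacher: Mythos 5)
Your proof is correct and follows the same overall architecture as the paper's: present the soliton as a semi-algebraic soliton for its full isometry group, pass to the $B$-orthogonal reductive decomposition so that $D=\left[\begin{smallmatrix} 0&0\\ 0&D_\pg\end{smallmatrix}\right]$, use diagonality of the flow to force $[S,A]=0$ (i.e.\ normality of $D_\pg$), and then promote $S$ to a derivation. You differ from the paper in how the two key steps are executed. For $[S,A]=0$, the paper works with the Ricci operators, writing $\Ricci(\ip_t)=(-2ct+1)^{-1}e^{sD_\pg}\Ricci(\ip_0)e^{-sD_\pg}$, differentiating the relation $[e^{sD_\pg}\Ricci(\ip_0)e^{-sD_\pg},\Ricci(\ip_0)]=0$ and killing the commutator with the trace identity $0=\tr{D_\pg[[D_\pg,\Ricci(\ip_0)],\Ricci(\ip_0)]}=-\tr{[D_\pg,\Ricci(\ip_0)]^2}$; you instead Taylor-expand the metric operator $P(t)=(-2ct+1)e^{-sD_\pg^t}e^{-sD_\pg}$ and read off $f'(0)=-2S$, $f''(0)=4S^2+2[S,A]$, which is an equally valid (arguably more elementary) computation, and your identification of $P(t)$ from the proof of Proposition \ref{saBF} is accurate. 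For the final step, the paper invokes the fact that the transpose of a normal derivation of a metric Lie algebra is again a derivation, so that $\unm(D+D^t)\in\Der(\ggo)$ directly; you instead observe that normality makes $D_\pg$ semisimple with hyperbolic part $S$ and elliptic part $A$, and apply the real Jordan--Chevalley refinement via the eigenspace relation $[\ggo_\lambda,\ggo_\mu]\subseteq\ggo_{\lambda+\mu}$, which is a standard and correct alternative justification of the same fact. Neither variation introduces a gap; both versions land exactly on Definition \ref{as}.
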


\begin{proof}
Let $(M,g_0)=(G/K,g_0)$ be a homogeneous Ricci soliton presented as a semi-algebraic soliton, with Ricci operator $\Ricci(\ip_0) = cI + D_\pg - A$, $A = \unm (D_\pg - D_\pg^t)$, $\ip_0 = g_0(eK)$. Fix a reductive decomposition $\ggo = \kg \oplus \pg$ such that $D:=\left[\begin{smallmatrix} 0&0\\ 0&D_\pg \end{smallmatrix}\right] \in\Der(\ggo,\mu_0)$, and any inner product on $\ggo$ that extends $\ip_0$ and makes $\kg\perp \pg$. Therefore $D$ is normal if and only if $D_\pg$ is so.

Assume that $(G/K,g_0)$ is Ricci flow diagonal, hence
\[
[\Ricci(\ip_t), \Ricci(\ip_0)] = 0, \qquad \forall t\in(T_-,T_+),
\]
By using that $\Ricci(\ip_t) = (-2ct+1)^{-1}e^{s(t)D_\pg} \Ricci(\ip_0) e^{-s(t)D_\pg}$ (which follows from the proof of Proposition \ref{saBF}), we can rewrite the previous formula as
\[
[e^{sD_\pg}\Ricci(\ip_0)e^{-sD_{\pg}}, \Ricci(\ip_0)] = 0,  \qquad \forall s\in(-\epsilon,\epsilon).
\]
Now this implies that $[[D_\pg,\Ricci(\ip_0)],\Ricci(\ip_0)] = 0$, and so
$$
0=\tr{D_\pg[[D_\pg,\Ricci(\ip_0)],\Ricci(\ip_0)]}= -\tr{[D_\pg,\Ricci(\ip_0)]^2}.
$$
It follows that $[D_\pg,\Ricci(\ip_0)]=0$ as it is skew-symmetric, or equivalently, $[D_\pg,A]=0$, and thus $D_\pg$ is normal. Hence $D$ is normal as well, and so $D^t\in \Der(\ggo)$ since it is a well-known fact that the transpose of a normal derivation of a metric Lie algebra is again a derivation (see e.g. the proof of \cite[Lemma 4.7]{solvsolitons} or use that $\Aut(\ggo)$ is an algebraic group). Thus $\Ricci(\ip_0)-cI = \unm(D+D^t)_\pg$, with $\unm(D+D^t) \in \Der(\ggo)$, which shows that the semi-algebraic soliton is actually algebraic, concluding the proof.
\end{proof}

\end{document}